\def\re#1{\par\hang\tex{#1}}
\def\cal{\mathcal}
\def\gg{\frak g}
\def\a{\alpha}
\def\b{\beta}
\def\d{\delta}
\def\D{\Delta}
\def\UU{{U}}
\def\LL{{\cal L}}
\def\SS{{\cal{S}}}
\def\g{\gamma}
\def\vp{\varepsilon}
\def\v{\varphi}
\def\sc{\scriptstyle}
\def\ssc{\scriptscriptstyle}
\def\cl{\centerline}
\def\rar{\rightarrow}
\def\vs{\vspace*}
\def\AA{{\cal A}}
\def\LL{{\cal L}}
\def\ni{\noindent}
\def\ptl{\partial}
\def\N{\mathbb{N}{\ssc\,}}
\def\Z{\mathbb{Z}{\ssc\,}}
\def\C{\mathbb{C}{\ssc\,}}
\def\C{{\cal K}}
\def\F{{\cal K}}
\def\QED{\hfill$\Box$} \numberwithin{equation}{section}
\newtheorem{theo}{Theorem}[section]
\newtheorem{conv}[theo]{Convention}
\newtheorem{rema}[theo]{Remark}
\newtheorem{lemm}[theo]{Lemma}
\newtheorem{prop}[theo]{Proposition}
\def\adddot{$\!\!\!${\bf.}\ \ }
\def\adddot{}
\begin{document}
\begin{CJK*}{GBK}{song}

\begin{center}{{\large \bf Dual Lie bialgebra structures of the twisted \\ Heisenberg-Virasoro type}
\footnote{ Supported by NSF grant 11671056, 11431010, 11371278
 of China and NSF grant ZR2013AL013, ZR2014AL001 of Shandong
Province.}}
\end{center}

\cl{ Guang'ai Song$^{\,*}$, Yucai Su$^{\,\dag}$, Xiaoqing Yue$^{\,\dag}$}

\cl{\small $^{*\,}$College of Mathematics and Information Science, Shandong
Technology and \vs{-4pt}Business }

\cl{\small University, Yantai, Shandong 264005, China}

\cl{$^{\dag\,}$School of Mathematical Sciences, Tongji University, Shanghai 200092, China}

\cl{\small E-mails: gasong@sdibt.edu.cn, ycsu@tongji.edu.cn, xiaoqingyue@tongji.edu.cn}\vskip8pt

\noindent{\small{\bf Abstract:} In this paper, by studying the maximal good subspaces, we determine the dual Lie coalgebras of the centerless twisted Heisenberg-Virasoro algebra. Based on this, we
construct the dual Lie bialgebras structures of the twisted Heisenberg-Virasoro type. As by-products, four new infinite dimensional Lie algebras are obtained.
\vs{5pt}

\noindent{\bf Key words:}  twisted Heisenberg-Virasoro algebra, Lie bialgebra, Lie coalgebra, dual Lie bialgebra, maximal good subspace.

\noindent{\bf Mathematics Subject Classification (2010)}: 17B62, 17B05, 17B06
}

\section{Introduction}\setcounter{section}{1}\setcounter{equation}{0}

The twisted Heisenberg-Virasoro algebra, first studied in \cite{A}, is an important algebra
structure, which has close relations with the Heisenberg algebra and the
Virasoro algebra.
In recent years, more and more attentions have been paid to this
algebra (see, e.g., \cite{E-D, B, F-S, L-C, L-W, L-Z, S-C}).
Let us first recall the definition here. The {\it twisted Heisenberg-Virasoro algebra} is a
Lie algebra with the underlining vector space $\cal{HV} = {\rm span}_{\C}\{L_m, I_n, C_L, C_I, C_{LI}\ |\ m,\, n \in \Z \}$ over an algebraically closed filed $\F$ of characteristic zero, subject to the following relations:
\begin{equation}\label{1.2}\begin{array}{l}
{[L_m, L_n] = (n-m)L_{m+n} + \d_{m+n,0}\frac{1}{12}(m^3-m)C_L,}\\
{ [I_m, I_n]= n\d_{m+n,0}C_I,}\\
{ [L_m, I_n]= nI_{m+n}+ \d_{m+n,0}C_{LI},}\\
{ [\LL, C_L ]= [\LL, C_I] = [\LL, C_{LI}]=0, \ \ \ m,\,n \in \Z.}\end{array}
\end{equation}
Obviously, the Virasoro algebra $\cal{V}= {\rm span}_{\C}\{L_m, C_L \,|\,m\in \Z \}$ and the Heisenberg algebra $\cal{H}={\rm span}_{\C}\{I_m, C_I\, |\, m\in \Z \}$
 are subalgebras of $\LL$.

In 
\cite{L-P}, the Lie bialgebra structures of the twisted Heisenberg-Virasoro type were investigated.
In the present paper, we will study the dual Lie bialgebra
structures of the twisted Heisenberg-Virasoro Lie bialgebra.
It is well-known that
the notion of Lie bialgebras was introduced by Drinfeld in 1983 \cite{D1,2} in connection with quantum groups.
Since Lie bialgebras as well as their quantizations provide important tools in searching for solutions of quantum
 Yang-Baxter equations and in producing new quantum groups, a number of papers on Lie bialgebras have appeared
 (e.g., \cite{1,1-2,D,G,M1,3,4,5,6,7,8,9,10,12,13,14}).
For instance, the structures of Witt and Virasoro type Lie bialgebras were presented in \cite{8,4}, and a
classification of this type Lie bialgebras was given in \cite{9}. All Lie bialgebra
structures on the Witt, the one-sided Witt, and the Virasoro algebras were shown to be triangular
coboundary, which can be obtained from their nonabelian two dimensional Lie subalgebras (cf.~\cite{8}).
For the generalized Witt type Lie bialgebras cases, the authors in \cite{10} obtained that all structures of Lie bialgebras on them are
coboundary triangular. Similar results also hold for some other kinds of Lie bialgebras (cf., e.g., \cite{13, 14}).

As stated in \cite{16,SS},
Lie bialgebra structures of coboundary triangular type may sound simple, but they are not
trivial. Indeed, there are many natural problems associated with them remain open. For example, even for  the (two-sided) Witt algebra and the Virasoro
algebra, a completely classification of coboundary triangular Lie
bialgebra structures on them is still open. Nevertheless, rather few is known on
representations of infinite dimensional Lie bialgebras. Therefore, it seems to us
that more attentions should be paid on this aspect. The authors of \cite{16,SS}  studied
dual Lie bialgebra structures of the (two-sided) Witt algebra, the Virasoro
algebra, the Poisson algebra and the loop and current-Virasoro
type algebras. As by-products, some new series of infinite dimensional
Lie algebras are obtained.
Studying dual Lie bialgebra structures can also
provide new approaches to investigate quantum groups, especially in
studying Lie bialgebras, and also help us to understand why we state that
the coboundary triangular Lie bialgebras are not trivial.
 We remind that the main problems occurring in the study of
dual Lie bialgebras are: (1) the determination
of the maximal good spaces, (2) the determination of the Lie algebra structures, especially (1).
 In the present paper, we have found an efficient way in tackling problem (1) (cf.~Theorem \ref{Th-3-1} and \eqref{MaxHVir}).

This paper proceeds as follows. Some definitions and preliminary
results are briefly recalled in Section 2. Then
structures of dual coalgebras of centerless twisted
Heisenberg-Virasoro algebra  are addressed. In Section 3, structures
of dual Lie bialgebras of Heisenberg-Virasoro algebra are
investigated. The main results of the present paper are summarized
in Theorems \ref{Th-3-1}, \ref{Th-4-1}, \ref{Th-4-2}, \ref{Th-4-3},
\ref{Th-4-4}, \ref{Th-4-5}.

\section{Definitions and preliminary
results}\setcounter{section}{2}\setcounter{equation}{0}

Let us briefly recall some notions on Lie bialgebras, for details, we
refer readers to, e.g., \cite{2,10,SS}. Throughout this paper, all vector spaces are
assumed to be over an algebraically closed field $\F$ of
characteristic zero, and as usual, $\N$ denotes the set
of nonnegative integers.

A triple $(\LL, [\cdot, \cdot], \d)$ is called {\it a Lie bialgebra}, if it satisfies the following  conditions:
\begin{enumerate}\parskip-1pt
\item[{\rm (1)}] $(\LL, [\cdot, \cdot])$ is a Lie algebra and
$(\LL, \d)$ is a Lie coalgebra\,;
\item[{\rm (2)}] 
$\d[x, y] = x \cdot \d(y) - y\cdot\d(x)$ for all $x,y \in \LL$,
\end{enumerate}
where $\d:\LL\to \LL\otimes\LL$ is a derivation and $ x \cdot (y\otimes z) = [x, y]\otimes z + y\otimes [x, z]$ for  $x, y, z \in \LL.$

A  Lie bialgebra $(\LL, [\cdot, \cdot],  \d)$ is {\it coboundary} if $\d$ is coboundary in the sense that there exists $r \in \LL \otimes \LL$ written as $r= \sum r^{[1]} \otimes r^{[2]} $, such that $\d(x) = x\cdot r$
for $x\in \LL$.
A coboundary Lie bialgebra  $(\LL, [\cdot, \cdot],  \d)$ is
{\it triangular} if $r$ satisfies  the following {\it classical Yang-Baxter equation} (CYBE):
\begin{equation}\label{CYBE}C(r) = [r_{12}, r_{13}] + [r_{12}, r_{23}] + [r_{13}, r_{23}]
 =0,\end{equation}
 \noindent where
$r_{12} = \sum r^{[1]} \otimes r^{[2]} \otimes 1,$ $r_{13}= \sum
r^{[1]} \otimes 1 \otimes r^{[2]},$ $r_{23} = \sum 1 \otimes r^{[1]}
\otimes r^{[2]}$ are elements in $ \UU(\LL)^{\otimes3}$ and $\UU(\LL)$ is the universal enveloping
algebra of $\LL$.

Two Lie bialgebras $(\LL,[\cdot,\cdot],\d)$ and
$(\LL',[\cdot,\cdot]',\d')$ are called {\it dually paired} if there
exists a nondegenerate bilinear form $\langle\cdot,\cdot\rangle$ on
$\LL'\times\LL$ $\big($extended uniquely to a bilinear form on
\mbox{$(\LL'\otimes\LL')\times(\LL \otimes\LL)$}$\big)$ such that their
bialgebra structures are related via
\begin{equation}\label{Dual-paired}
\langle[f, h ]', \xi\rangle = \langle f \otimes h, \d \xi \rangle ,\
\ \  \langle \d' f, \xi \otimes \eta \rangle = \langle f, [\xi,
\eta]\rangle \mbox{ \ for \ $f, h \in \LL',\ \xi, \eta \in \LL.$
}\end{equation}
 In particular, $\LL$ is called a {\it self-dual Lie bialgebra} if $\LL'=\LL$ as a vector space.

Note that a finite dimensional Lie bialgebra
$(\LL,[\cdot,\cdot],\d)$ is always self-dual as the linear dual
space $\LL ^*$ is naturally a Lie bialgebra by dualization and
there exists a vector space isomorphism $\LL\to\LL^*$ which pulls
back the bialgebra structure on $\LL^*$ to $\LL$ to obtain another
bialgebra structure on $\LL$ to make it to be self-dual. However,
infinite dimensional Lie bialgebras have sharp differences, as they are
not self-dual in general.

For convenience, we denote the Lie bracket of Lie algebra $\LL=(\LL ,
[\cdot, \cdot])$ by $\v$, i.e., $[\cdot, \cdot] =
\v:\LL\otimes\LL\to\LL$ and $\v^{\ast}:\LL^*\to(\LL\otimes\LL)^*$
to be the dual of $\v$.

A  subspace $U$ of $\LL
^*$ is called a {\it good subspace} if $\v^*(U) \subset U \otimes
U.$ It follows that $\LL ^{\circ}$ defined below
is also a good subspace
of $\LL^*$, which is obviously the {\it maximal good subspace} of $\LL^*$ \cite{M1}:
\begin{equation}\label{max-good}\LL ^{\circ} = \mbox{$\sum\limits_{U\in \Re}$} U,
\mbox{ \ where $\Re = \{ U\, |\, U {\rm\ is \ a \ good \ subspace \ of \
}\LL^*\}$.}\end{equation}
The notion of good subspaces of an associative algebra can be
defined analogously.
It is proved in  \cite{M1} that for
any good subspace $U$ of $\LL^*$, the pair $(U,
\v^{\ast}|_{U})$ is a Lie coalgebra. In particular, $(\LL^{\circ},
\v^*|_{\LL^{\circ}})$ is a Lie coalgebra.
%

 For any Lie algebra $\LL$, the dual space $\LL^*$ has a natural right $\LL$-module
 structure defined by $(f\cdot x)(y) = f([x,
 y])\mbox{  for  }f\in \LL^*,\ x, y\in \LL.$ We denote $f\cdot \LL  = {\rm span} \{ f\cdot x\, |\, x \in
 \LL \}$, the {\it space of translates} of $f$ by elements of $\LL$.

We summarize some results of \cite{1,1-2,D, G} as follows.
 \begin{prop}\adddot\label{Th111}
 Let $\LL $ be a Lie algebra.
 Then\begin{enumerate}\parskip-1pt\item[{\rm (1)}] $\LL^{\circ}=\{f\in\LL^*\,|\,f\cdot\LL\mbox{ is finite dimensional}{\sc\,}\}.$
\item[{\rm (2)}]
$\LL^{\circ}=(\v^*)^{-1}(\LL^*\otimes\LL^*)$, the preimage of
$\LL^*\otimes\LL^*$ in $\LL^*$.
\end{enumerate}
 \end{prop}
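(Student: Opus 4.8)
The plan is to reduce both statements to a single set and then identify $\LL^{\circ}$ with it. First I would reinterpret $\v^*(f)$ for $f\in\LL^*$ as the bilinear form $(x,y)\mapsto\v^*(f)(x\otimes y)=f([x,y])=(f\cdot x)(y)$. Thus the linear map $x\mapsto\v^*(f)(x\otimes\,\cdot\,)$ is exactly $x\mapsto f\cdot x$, whose image is $f\cdot\LL$. A bilinear form on $\LL$ lies in the image of the canonical injection $\LL^*\otimes\LL^*\hookrightarrow(\LL\otimes\LL)^*$ precisely when it has finite rank, and that rank equals $\dim(f\cdot\LL)$. Concretely, if $f\cdot\LL$ is finite dimensional with basis $h_1,\dots,h_n$, then writing $f\cdot x=\sum_i c_i(x)h_i$ the coordinate maps $c_i$ are linear, so $f([x,y])=\sum_i c_i(x)h_i(y)$ gives $\v^*(f)=\sum_i c_i\otimes h_i\in\LL^*\otimes\LL^*$; conversely a finite expansion exhibits $f\cdot\LL$ as finite dimensional. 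This proves $(\v^*)^{-1}(\LL^*\otimes\LL^*)=\{f\in\LL^*\mid\dim(f\cdot\LL)<\infty\}=:W$, so that (1) and (2) both amount to the single claim $\LL^{\circ}=W$.

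For the inclusion $\LL^{\circ}\subseteq W$ I would use that any good subspace $U$ satisfies $\v^*(U)\subseteq U\otimes U\subseteq\LL^*\otimes\LL^*$; hence every $f\in U$ lies in $W$ by the previous paragraph, so $U\subseteq W$ and therefore $\LL^{\circ}=\sum_{U\in\Re}U\subseteq W$. The reverse inclusion $W\subseteq\LL^{\circ}$ is the heart of the matter, and I would obtain it by showing that $W$ is itself a good subspace, so that $W\in\Re$ and hence $W\subseteq\LL^{\circ}$. Fix $f\in W$ and write $\v^*(f)=\sum_{i=1}^n g_i\otimes h_i$ with $\{g_i\}$ and $\{h_i\}$ each linearly independent; separating points by the $g_i$ (respectively the $h_i$) together with the antisymmetry of $f([\cdot,\cdot])$ shows $f\cdot\LL=\span\{h_i\}=\span\{g_i\}$. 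Thus, to conclude $\v^*(f)\in W\otimes W$ it suffices to prove that $W$ is closed under the right action, i.e. is a submodule, for then $g_i,h_i\in f\cdot\LL\subseteq W$.

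The main obstacle, and the only place where the Lie structure beyond bilinearity enters, is establishing this closure. Here I would invoke the Jacobi identity in the form $[x,[y,z]]=[[x,y],z]+[y,[x,z]]$, which after applying $f$ yields the module identity $(f\cdot x)\cdot y=f\cdot[x,y]+(f\cdot y)\cdot x$. Letting $y$ range over $\LL$ and using $f\cdot\LL=\span\{h_1,\dots,h_n\}$, the first term stays in $f\cdot\LL$ while the second lies in $\span\{h_1\cdot x,\dots,h_n\cdot x\}$; hence $(f\cdot x)\cdot\LL\subseteq f\cdot\LL+\span\{h_i\cdot x\}$ is finite dimensional, so $f\cdot x\in W$. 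This proves $W$ is a submodule, whence a good subspace, and therefore $W\subseteq\LL^{\circ}$. Combining the two inclusions gives $\LL^{\circ}=W=(\v^*)^{-1}(\LL^*\otimes\LL^*)$, establishing both (1) and (2).
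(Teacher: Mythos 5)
Your proof is correct. Note, however, that the paper itself contains no proof of this proposition: it is stated explicitly as a summary of known results, with the arguments delegated to the cited references (Block; Block--Leroux; Diarra; Griffing). Your argument is essentially the classical one from those sources, reconstructed correctly: identify $(\varphi^*)^{-1}({\cal L}^*\otimes{\cal L}^*)$ with the set $W$ of functionals whose bracket form has finite rank (equivalently $\dim f\cdot{\cal L}<\infty$), observe that every good subspace lies in $W$ since $\varphi^*(U)\subseteq U\otimes U\subseteq{\cal L}^*\otimes{\cal L}^*$, and then use the Jacobi identity, in the module form $(f\cdot x)\cdot y=f\cdot[x,y]+(f\cdot y)\cdot x$, to show $W$ is stable under the right action and hence is itself a good subspace, so $W\subseteq{\cal L}^{\circ}$. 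This is exactly the step where the Lie structure enters, and your estimate $(f\cdot x)\cdot{\cal L}\subseteq f\cdot{\cal L}+{\rm span}\{h_1\cdot x,\dots,h_n\cdot x\}$ is the standard way to carry it out. Two points you could make explicit, though both are routine: first, linear independence of the functionals $g_1,\dots,g_n$ gives surjectivity of $x\mapsto\big(g_1(x),\dots,g_n(x)\big)$, which is what upgrades $f\cdot{\cal L}\subseteq{\rm span}\{h_i\}$ to equality (and, via antisymmetry of $f([\cdot,\cdot])$, gives $f\cdot{\cal L}={\rm span}\{g_i\}$ as well, so that both tensor factors of $\varphi^*(f)$ land in $f\cdot{\cal L}$); second, $W$ is a linear subspace, since $(f+g)\cdot{\cal L}\subseteq f\cdot{\cal L}+g\cdot{\cal L}$, which is needed before one may call $W$ a good subspace and conclude $W\in\Re$.
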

%

%
For an infinite dimensional Lie algebra $g$, there is no effective
approach to determine the good subspace $g^{\circ}$ of it. However, for an
associative commutative algebra $\cal{A}$, Sweedler \cite{12}
gave some approaches to determine $\cal{A^{\circ}}$. In the cases of
$\cal{A}= \F[x]$ and $\F[x^{\pm 1}]$, the maximal good subspaces of $\cal{
A}$ were determined (see \cite{5,6,7, 8, 10, 16, SS}).
Although the property of the good subspaces of an associative
commutative algebra has great difference with the Lie algebra case,
if a Lie algebra can be induced from an associative commutative
algebra, then the good subspaces of this  Lie algebra can be
determined through the associative commutative algebra case. Therefore, let us recall some results about associative commutative
algebra for later use.

Let $(\AA, \mu)$ be an associative commutative algebra over a field $\F$. By Proposition
\ref{Th111} and \cite{12}, we have $\AA^{\circ} = (\mu^*)^{-1}(\AA^* \otimes \AA^*)$. For $\ptl \in {\rm Der}(\AA)$, since
$$\mbox{$\ptl \mu =\mu( {\rm id} \otimes \ptl + \ptl \otimes {\rm id})$, \ $\mu^*\ptl^*(f) = ({\rm id} \otimes \ptl^* + \ptl^* \otimes {\rm id}
)\mu^*(f) \in \cal{A^*} \otimes \cal{A^*}$,}$$
where $f\in \AA^{\circ}$, it follows that
$\ptl^*(\cal{A^{\circ}}) \subset \cal{A^{\circ}}$.

In the case of $\AA= \F[x^{\pm 1}]$, let $\SS= \{\,x^n\, |\, n\in \Z
\}$ be the standard basis of $\AA$ and $\SS^{\prime}= \{\,\vp^n\, |\,
n\in \Z \}$ be the dual basis of $\SS$, i.e.,
$\vp^i (x^j) = \d_{i,j}
$ for $i, j \in  \Z.$ For
$f \in \AA^*$, $f$ can be expressed as $f=\sum_{j\in \Z} f_j
\vp^j.$
The structures of $\AA^{\circ}$ can be found as follows (see \cite{8, 16, SS, SS-2}).

\begin{lemm}\label{lem-3-2}Let $f=\sum_{j\in \Z} f_j \vp^j \in \AA^* $. Then
$f \in \AA^{\circ}$ if and only if there exist $r\in \N$ and $c_j \in \F$ for $j=1, 2, \cdots, r$
such that  $f_n = c_1 f_{n-1} + c_2f_{n-2} + \cdots + c_r f_{n-r}$ for all $n\in\Z.$
\end{lemm}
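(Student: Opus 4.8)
The plan is to read off $\AA^{\circ}$ from the identity $\AA^{\circ}=(\mu^*)^{-1}(\AA^*\otimes\AA^*)$ recalled above, and then recognize the resulting condition as a finite-rank (Hankel) condition that is classically equivalent to a linear recurrence. First I would compute $\mu^*(f)$ on the standard basis: since $\mu(x^i\otimes x^j)=x^{i+j}$, one gets $\mu^*(f)(x^i\otimes x^j)=f(x^{i+j})=f_{i+j}$, so $\mu^*(f)$ is the functional on $\AA\otimes\AA$ represented by the doubly-infinite array $(f_{i+j})_{i,j\in\Z}$. The content of $\mu^*(f)\in\AA^*\otimes\AA^*$ is exactly that this array has finite rank; equivalently, writing $R_i=\sum_{j\in\Z}f_{i+j}\vp^j\in\AA^*$ for its $i$-th row, the condition is
\[
\dim_{\F}\span\{R_i\,|\,i\in\Z\}<\infty .
\]
Note that $R_i=f\cdot x^i$ is the translate of $f$, so this says precisely that $f\cdot\AA$ is finite dimensional, in agreement with Proposition \ref{Th111}.

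Next I would encode the rows through the shift operator $T\colon\AA^*\to\AA^*$ defined by $Tg=g\cdot x$, i.e. $(Tg)_j=g_{j+1}$ in coordinates; since $\AA=\F[x^{\pm1}]$ contains $x^{-1}$, $T$ is a linear bijection with $T^{-1}g=g\cdot x^{-1}$. Then $R_i=T^if$ and $W:=\span\{R_i\,|\,i\in\Z\}$ is the smallest $T$-stable and $T^{-1}$-stable subspace of $\AA^*$ containing $f$. Thus the lemma reduces to the assertion that $\dim_{\F}W<\infty$ if and only if $f$ satisfies a constant-coefficient recurrence $f_n=c_1f_{n-1}+\cdots+c_rf_{n-r}$ for all $n\in\Z$.

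For the forward implication, assuming $\dim_{\F}W=d<\infty$, the bijection $T$ restricts to an invertible linear endomorphism of $W$ (it is $T$-stable and $T^{-1}$-stable); by the Cayley--Hamilton theorem $T|_{W}$ is annihilated by its characteristic polynomial $p(t)=t^r-c_1t^{r-1}-\cdots-c_r$, and $c_r\neq0$ because $T|_{W}$ is invertible. Hence $p(T)f=0$, and comparing the coefficient of $\vp^{\,n-r}$ on both sides (equivalently, evaluating at $x^{n-r}$) yields $f_n=c_1f_{n-1}+\cdots+c_rf_{n-r}$; since this is an identity of functionals it holds in every coordinate, i.e. for all $n\in\Z$. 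Conversely, given such a recurrence, set $p(t)=t^r-c_1t^{r-1}-\cdots-c_r$; then $p(T)f=0$. Factoring $p(t)=t^s\tilde p(t)$ with $\tilde p(0)\neq0$ and using that $T$ is injective gives $\tilde p(T)f=0$ with $\tilde p$ having nonzero constant term, so invertibility of $T$ lets me express every $T^mf$ $(m\in\Z)$ as an $\F$-combination of $f,Tf,\dots,T^{\deg\tilde p-1}f$; hence $W$ is finite dimensional (the case $f=0$ being trivial), and therefore $\mu^*(f)\in\AA^*\otimes\AA^*$, i.e. $f\in\AA^{\circ}$.

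The main obstacle is the passage, in both directions, between the finite-rank condition on $\mu^*(f)$ and a recurrence valid for \emph{all} $n\in\Z$. The delicate point is the two-sidedness: a priori a finite-dimensional shift-invariant space only forces relations among the rows, and it is the invertibility of $T$ (available precisely because we work over $\F[x^{\pm1}]$ rather than $\F[x]$) that both guarantees $c_r\neq0$ in the forward direction and propagates the recurrence to all negative indices in the converse. Handling possible trailing zeros $c_r=0$ in the converse, via the factorization $p=t^s\tilde p$, is the remaining technical wrinkle.
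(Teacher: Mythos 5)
Your proof is correct, and it is worth noting at the outset that the paper contains no internal proof of Lemma \ref{lem-3-2}: the statement is simply quoted from \cite{8,16,SS,SS-2}, where the standard argument runs through the cofinite-ideal characterization of the finite dual. In that classical route, $f\in\AA^{\circ}$ iff $\ker f$ contains an ideal of finite codimension; since $\F[x^{\pm1}]$ is a principal ideal domain in which powers of $x$ are units, such an ideal is $\big(p(x)\big)$ with $p(x)=x^r-c_1x^{r-1}-\cdots-c_r$, $c_r\neq0$, and the vanishing $f\big(x^{n-r}p(x)\big)=0$ for all $n\in\Z$ is literally the two-sided recurrence. Your route is genuinely different and self-contained: you work directly from $\AA^{\circ}=(\mu^*)^{-1}(\AA^*\otimes\AA^*)$, identify membership in $\AA^*\otimes\AA^*$ with finite rank of the array $(f_{i+j})_{i,j\in\Z}$ (equivalently, finite-dimensionality of the translate space $W=f\cdot\AA$, consistent with Proposition \ref{Th111}), and then apply Cayley--Hamilton to the shift $T$ restricted to $W$. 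The delicate points are all handled: $T|_W$ is invertible (your argument via $T^{-1}$-stability works; injectivity of $T$ plus $\dim W<\infty$ would already suffice), which yields $c_r\neq0$ in the forward direction, and in the converse your factorization $p(t)=t^s\tilde p(t)$ with $\tilde p(0)\neq0$, combined with injectivity of $T$, correctly disposes of trailing zero coefficients, the all-zero case forcing $f=0$. The only step left implicit is the standard fact that an element of $(\AA\otimes\AA)^*$ lies in $\AA^*\otimes\AA^*$ iff its matrix has finite rank, which you could record in one line by writing $\mu^*(f)=\sum_k g_k\otimes h_k$ and observing $R_i=\sum_k g_k(x^i)h_k$, with the converse obtained by choosing a basis of the row space. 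What your approach buys is a first-principles linear-algebra proof avoiding Sweedler's ideal-theoretic machinery; what the cited approach buys is brevity and an ideal-theoretic picture that transfers to other coordinate rings. In both, as you rightly emphasize, invertibility of the shift, available precisely because one works over $\F[x^{\pm1}]$ rather than $\F[x]$, is what makes the recurrence two-sided.
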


Let $(\cal{A}, \mu_1)$ and $(\cal{B}, \mu_2)$ be two  associative
commutative algebras, $\mu_1, \mu_2$ be the multiplications of
$\cal{A}$ and $\cal{B}$ respectively. Define the direct sum $\cal{L}
= \cal{A} \oplus \cal{B}$ with the multiplication $\mu = (\mu_1,
\mu_2)$, i.e. $\mu\big((a_1, b_1), (a_2, b_2)\big) = \big(\mu_1(a_1, a_2),
\mu_2(b_1, b_2)\big)$. Then $(\cal{L}, \mu)$ is also an associative
commutative algebra, and it is easy to prove the following lemma.

\begin{lemm}\label{lem-3-1} Let $(\cal{L}, \mu)$ be the above associative commutative
algebra, then 
$\cal{L^{\circ}} = ({\cal A}\oplus {\cal B})^{\circ}= {\cal A}^{\circ}
\oplus {\cal B}^{\circ}.$
\end{lemm}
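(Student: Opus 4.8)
The plan is to work directly with the characterization $\AA^{\circ} = (\mu^*)^{-1}(\AA^* \otimes \AA^*)$ recalled just above, applied to each of the three algebras $\AA$, $\BB$ and $\LL = \AA\oplus\BB$. The starting observation is that for a two-term direct sum the natural maps give identifications $\LL^* = \AA^* \oplus \BB^*$, writing each $f\in\LL^*$ uniquely as $f = (f_A, f_B)$ with $f(a,b) = f_A(a) + f_B(b)$, together with
\[
\LL\otimes\LL = (\AA\otimes\AA)\oplus(\AA\otimes\BB)\oplus(\BB\otimes\AA)\oplus(\BB\otimes\BB),
\]
whence $(\LL\otimes\LL)^*$ splits as the direct sum of the four corresponding dual blocks. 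First I would record the decisive feature of the direct-sum multiplication: since $\mu\big((a_1,0),(0,b_2)\big) = 0$ and $\mu\big((0,b_1),(a_2,0)\big)=0$, the two factors annihilate each other, so the cross blocks $\AA\otimes\BB$ and $\BB\otimes\AA$ lie in the kernel of $\mu^*(f)$ for every $f$.

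Next I would compute $\mu^*(f)$ for $f=(f_A,f_B)$. Evaluating on $(a_1,b_1)\otimes(a_2,b_2)$ yields $f_A\big(\mu_1(a_1,a_2)\big) + f_B\big(\mu_2(b_1,b_2)\big)$, so in the four-block decomposition of $(\LL\otimes\LL)^*$ one gets
\[
\mu^*(f) = \big(\mu_1^*(f_A),\,0,\,0,\,\mu_2^*(f_B)\big),
\]
with $\mu_1^*(f_A)\in(\AA\otimes\AA)^*$ and $\mu_2^*(f_B)\in(\BB\otimes\BB)^*$. Thus the whole question reduces to deciding when this element lies in the subspace $\LL^*\otimes\LL^* \subseteq (\LL\otimes\LL)^*$.

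The key step is to check that the embedding $\LL^*\otimes\LL^* \hookrightarrow (\LL\otimes\LL)^*$ is compatible with the four-block splitting: under $\LL^*\otimes\LL^* = (\AA^*\otimes\AA^*)\oplus(\AA^*\otimes\BB^*)\oplus(\BB^*\otimes\AA^*)\oplus(\BB^*\otimes\BB^*)$ each summand is carried into the matching dual block by the standard (injective) tensor embedding, and since the four blocks of $(\LL\otimes\LL)^*$ are in direct sum, so are their images. Consequently an element of $(\LL\otimes\LL)^*$ lies in $\LL^*\otimes\LL^*$ if and only if each of its four block-components lies in the image of the corresponding summand. Applying this to $\mu^*(f)=(\mu_1^*(f_A),0,0,\mu_2^*(f_B))$, the two cross components vanish (hence are automatically admissible), and the condition collapses to $\mu_1^*(f_A)\in\AA^*\otimes\AA^*$ together with $\mu_2^*(f_B)\in\BB^*\otimes\BB^*$; by the defining characterizations these say precisely $f_A\in\AA^{\circ}$ and $f_B\in\BB^{\circ}$. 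Therefore $f\in\LL^{\circ} \Leftrightarrow f_A\in\AA^{\circ}$ and $f_B\in\BB^{\circ}$, which is exactly the asserted equality $\LL^{\circ} = \AA^{\circ}\oplus\BB^{\circ}$.

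The step I expect to be the only genuine obstacle is this last compatibility claim, because in infinite dimensions $\LL^*\otimes\LL^*$ is a \emph{proper} subspace of $(\LL\otimes\LL)^*$, so membership cannot be read off naively; one must actually verify that the tensor-product subspace is the internal direct sum of the four block-images, so that belonging to it is detected blockwise. Everything else — the dualization of the direct sum, the vanishing of the cross terms from $\mu$, and the final translation back through $\AA^{\circ}=(\mu_1^*)^{-1}(\AA^*\otimes\AA^*)$ and $\BB^{\circ}=(\mu_2^*)^{-1}(\BB^*\otimes\BB^*)$ — is routine bookkeeping.
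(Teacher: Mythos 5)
Your proof is correct, and in fact the paper supplies no argument at all for this lemma (it is stated with the remark that it ``is easy to prove''), so there is no official proof to match; your write-up fills the gap along the lines the paper's own framework suggests. You work from the characterization ${\cal A}^{\circ}=(\mu_1^*)^{-1}({\cal A}^*\otimes{\cal A}^*)$ that the paper recalls from Proposition \ref{Th111}(2) and \cite{12}, and you correctly identify the one point that genuinely needs checking: since ${\cal L}^*\otimes{\cal L}^*$ is a \emph{proper} subspace of $({\cal L}\otimes{\cal L})^*$ in infinite dimensions, membership must be shown to be detectable blockwise. Your verification of this is sound: each of the four summands of ${\cal L}^*\otimes{\cal L}^*=({\cal A}^*\otimes{\cal A}^*)\oplus({\cal A}^*\otimes{\cal B}^*)\oplus({\cal B}^*\otimes{\cal A}^*)\oplus({\cal B}^*\otimes{\cal B}^*)$ is carried by the canonical injection into a \emph{distinct} block of $({\cal L}\otimes{\cal L})^*=({\cal A}\otimes{\cal A})^*\oplus({\cal A}\otimes{\cal B})^*\oplus({\cal B}\otimes{\cal A})^*\oplus({\cal B}\otimes{\cal B})^*$, so the image is the internal direct sum of the four block-images and an element belongs to it iff each block-component does; combined with the computation $\mu^*(f)=\big(\mu_1^*(f_A),0,0,\mu_2^*(f_B)\big)$, which uses that the two factors annihilate each other under $\mu$, this gives exactly $f\in{\cal L}^{\circ}\Leftrightarrow f_A\in{\cal A}^{\circ}$ and $f_B\in{\cal B}^{\circ}$. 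For comparison, there is an even shorter route, presumably the intended ``easy'' proof, via the translate characterization (the associative analogue of Proposition \ref{Th111}(1), cf.\ \cite{12}): for $f=(f_A,f_B)$ one has $\big(f\cdot(a,b)\big)(c,d)=f_A(\mu_1(a,c))+f_B(\mu_2(b,d))$, i.e.\ $f\cdot(a,b)=(f_A\cdot a,\,f_B\cdot b)$, hence $f\cdot{\cal L}=(f_A\cdot{\cal A})\oplus(f_B\cdot{\cal B})$, which is finite dimensional iff both $f_A\cdot{\cal A}$ and $f_B\cdot{\cal B}$ are; this yields the lemma in two lines without ever analyzing the embedding ${\cal L}^*\otimes{\cal L}^*\hookrightarrow({\cal L}\otimes{\cal L})^*$. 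What your longer route buys is that it stays entirely inside the preimage formulation actually used later in the paper (e.g.\ in the formula \eqref{eq-3-1} for $\mu^{\circ}$), making explicit that $\mu^{\circ}(f)$ has no cross terms, a fact the paper tacitly uses.
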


\def\LL{{\overline{\cal{HV}}}{\ssc\,}}%
The twisted Heisenberg-Virasoro algebra \eqref{1.2} is the universal central extension of the algebra
with the underlining vector space $\LL ={\rm span}_{\C}\{L_m, I_n \,|\,m,n \in \Z \}$
and the brackets
\begin{equation}\label{1-1-1}\begin{array}{l}
{[L_m, L_n] = (n-m)L_{m+n},}\\
{ [I_m, I_n]= 0,}\\
{ [L_m, I_n]= nI_{m+n},\ \  m,\, n \in \Z.}\end{array}
\end{equation}
Thus it is called the {\it centerless
twisted Heisenberg-Virasoro algebra}.  This algebra has a polynomial
realization as follows. Let $\cal{A}= \C[x^{\pm 1}]$ be the
Laurent polynomial over complex field $\C$, with one variable $x$
 and the usual derivation $\ptl =\frac{d}{dx}$ of $\C[x^{\pm 1}]$. Denote the direct sum
of Laurent polynomial $\C[x^{\pm 1}]$ by $\LL =
\AA \oplus \AA =\C[x^{\pm 1}] \oplus \C[x^{\pm 1}]$. For $\big(f_1(x), f_2(x)\big),
\big(g_1(x), g_2(x)\big) \in \LL $, we define the multiplication $\mu = (\mu_1,
\mu_2)$ on $\cal{L}$ by
\begin{equation}\label{1-1-3}
\mu\Big(\big(f_1(x), f_2(x)\big), \big(g_1(x), g_2(x)\big)\Big)=\Big(\mu_1\big(f_1(x), g_1(x)\big),
\mu_2\big(f_2(x), g_2(x)\big)\Big),\end{equation}
 \noindent where $\mu_1$ is the multiplication of the first copy $\AA$ of $\LL$ and $\mu_2$ is the multiplication
of the second copy $\AA$ of $\LL$ in the usual way.
 Then $(\LL, \mu)$ is an associative commutative algebra.

Denote\begin{equation}\label{denotellll}
L_m = (x^{m+1}, 0),\ I_n = (0, x^n)\in\LL\mbox{ for }m, n \in \Z.\end{equation} Define $(\ptl, {\rm id})\big((x^k, x^l)\big) = \big(\ptl(x^k),
{\rm id}(x^l)\big)$ and some other similar operators on $\LL$ are defined in the same way.
Now let $\tau,{\rm id}$ and $T$ be operators on  $\LL$ and $\LL\otimes\LL$ such that $$\tau(x^m, x^n)= (x^n, x^m),\ \ {\rm id}(x^m, x^n)=(x^m, x^n),\ \
T\big((x^m, x^n), (x^k, x^l)\big)= \big((x^k, x^l), (x^m, x^n)\big).$$
Then the brackets  (\ref{1-1-1}) can be realized as follows.
\begin{equation}\label{1-1-2}
\aligned
\,[L_m, L_n] &=\mu\big(({\rm id}, 0) \otimes (\ptl, 0)- (\ptl, 0) \otimes ({\rm id},0)\big)\big((x^{m+1}, 0), (x^{n+1}, 0)\big)
\\&=(n-m)
(x^{m+n+1}, 0),
\\ [I_m, I_n] &=\mu\Big((0, {\rm id})\otimes (0, {\rm id})-\big((0, {\rm id})\otimes (0, {\rm id})\big)\cdot T\Big)\big((0, x^m), (0, x^n)\big)
\\&=0,
\\ [L_m, I_n]&=\mu\Big(\big((0, {\rm id})\otimes(0, \ptl)\big)\cdot(\tau, {\rm id})\Big)\big((x^{m+1},
0), (0, x^n)\big)\\&= n(0, x^{m+n}),\ \ \ \ m,\ n \in \Z.
\endaligned
\end{equation}
%
%
%
%
%
%
For the centerless twisted Heisenberg-Virasoro algebra defined by
(\ref{1-1-2}), there are two natural approaches to determine the Lie
coalgebras structures on some subspaces of $\LL^*$. One is
to converse the arrow of the Lie bracket $\v.$ That is, let
$\LL^{\circ}_{\v}$ be the maximal good subspace of $\LL^*$
under $\v^*: \LL^* \rar (\LL \otimes \LL)^*$,  which is
the dual multiplication of the Lie bracket  $\v: \LL \otimes
\LL \rar \LL$. Take $\v^{\circ}
=\v^*|_{{\LL^{\circ}_{\v}}}$, then we obtain $\v^{\circ}(
{\LL^{\circ}_{\v}}) \subset  {\LL^{\circ}_{\v}} \otimes
{\LL^{\circ}_{\v}}$ and $({\LL^{\circ}_{\v}}, \v^{\circ})$ is a
Lie coalgebra, which we call the dual Lie coalgebra of Lie
algebra $(\LL, \v)$.

Another approach is induced from the coassociative cocommutative
coalgebra $({\LL^{\circ}}, \mu^{\circ})$. 
Let $(\LL = \AA \oplus \AA, \mu)$ be
the associative commutative algebra defined by (\ref{1-1-3}) with $\mu:
\LL \otimes \LL \rar \LL. $ Then $\mu^*: \LL^* \rar (\LL \otimes
\LL)^*. $ Denote $\mu^{\circ} = \mu^* |_{\LL^{\circ}_{\mu}}$ and
$\mu^{\circ}:\LL_{\mu}^{\circ} \rar \LL_{\mu}^{\circ} \otimes
\LL_{\mu}^{\circ}.$ For $f \in \LL_{\mu}^{\circ},\ f=(f_1, f_2) \in
\LL_{\mu}^{\circ} = (\AA^{\circ} \oplus \AA^{\circ})$ (by Lemma
\ref{lem-3-1}), we have
\begin{equation}\label{eq-3-1}
\mu^{\circ}(f) =\mu^{\circ}((f_1, 0) + (0, f_2))=\sum\limits_{(f_1)}(f_1^{(1)}, 0) \otimes (f_1^{(2)}, 0)
+ \sum\limits_{(f_2)}(0, f_2^{(1)}) \otimes (0, f_2^{(2)}),
\end{equation}
\noindent where $ \mu_1^{\circ}(f_1) = \sum_{f_1}f_1^{(1)}
\otimes f_1^{(2)},\ \mu_2^{\circ} (f_2) =\sum_{f_2} f_2^{(1)}
\otimes f_2^{(2)}.$
%
%
Let $\ptl^{\circ}= \ptl^{\ast}|_{\AA^{\circ}}$, using
(\ref{1-1-2}), for $f = (f_1, f_2)=(f_1, 0)+ (0, f_2) \in
\LL^{\circ}_{\mu} = \AA^{\circ} \oplus \AA^{\circ} $, we obtain
\begin{equation}\label{3-1-4}\begin{array}{lll}
\D_{\mu}\big((f_1,0)\big)&=&\Big(({\rm id}, 0) \otimes (\ptl^{\circ}, 0) -
(\ptl^{\circ}, 0) \otimes
({\rm id}, 0)\Big) \mu^{\circ} (f_1, 0)\\
&=&\sum\limits_{f_1}\Big((f_1^{(1)}, 0) \otimes
\big(\ptl^{\circ}(f_1^{(2)}), 0\big) -
\big(\ptl^{\circ}(f_1^{(1)}), 0\big) \otimes (f_1^{(2)}, 0)\Big),\\
\D_{\mu}\big((0, f_2)\big)&=& ((\tau, id)\cdot(((0, id)\otimes (0, \ptl^{\circ}))-(id, \tau)\cdot (0, \ptl^{\circ})\otimes (0, id)))\mu^{\circ}((0, f_2))\\
&=& \sum\limits_{f_2}\big((f_2^{(1)}, 0) \otimes \big(0,
\ptl^{\circ}(f_2^{(2)}))-(0, \ptl^{\circ}(f_2^{(1)})) \otimes (f_2^{(2)}, 0)\big)\big),
 \end{array}\end{equation}
where the second equation is obtained by the following, for $(x^m, x^n), (x^k, x^l) \in \SS$, 
 $$\begin{array}{lll}\D_{\mu}(0, f_2) ((x^m, x^n) \otimes (x^k, x^l))= (0, f_2)([(x^m, x^n), (x^k, x^l)])\\
 \phantom{XX}
 =(0, f_2)(x^m\cdot \ptl(x^k)-\ptl(x^m)\cdot x^k, x^m \cdot \ptl(x^l) - \ptl(x^n) \cdot x^k)\\
 \phantom{XX}
 =f_2(x^m \cdot \ptl(x^l))-f_2(\ptl(x^n) \cdot x^k)\\
 \phantom{XX}
 =((f_2^{(1)}, 0)\otimes (0, \ptl^{\circ}(f_2^{(2)})) -(0, \ptl^{\circ}(f_2^{(1)})) \otimes (f_2^{(2)}, 0))((x^m, x^n) \otimes (x^k, x^l))
.  \end{array}$$
It is easy to verify that $(\LL^{\circ}_{\mu}, \D_{\mu})$ is a Lie
coalgebra.

We remark that 
since $(\AA, \mu_2)$ is an associative commutative algebra,
the coalgebra \linebreak $(\AA^{\circ}, \mu_2^{\circ})$ is a coassociative
cocommutative algebra. By the second equation of $(\ref{1-1-2})$, we
have $$\Big((0,{\rm id})\otimes (0,{\rm id} )-T\cdot \big((0, {\rm id}) \otimes (0, {\rm id})\big)\Big)\mu^{\circ}(0, f_2) =0\mbox{ for all $(0,f_2) \in
\LL^{\circ}_\mu$}.
%
%
$$
We also remark that 
the difference between  $\LL^{\circ}_{\mu}$ and
$\LL^{\circ}_{\v}$ is that $\LL^{\circ}_{\mu}$ is the maximal good
subspace of $\LL^{\ast}$ under the map $\mu^{\ast}$, where $\mu$ is the
multiplication of associative commutative algebra $(\LL, \mu)$; while
$\LL^{\circ}_{\v}$ is the maximal good
 subspace of $\LL^{\ast}$ under the map $\v^{\ast}$, where $\v$ is
 the Lie bracket of Lie algebra $(\LL, \v).$
\begin{prop} The Lie coalgebra $(\LL^{\circ}_{\mu}, \D_{\mu})$ is a
Lie subcoalgebra of $(\LL_{\v}^{\circ}, \D_{\v}).$
\end{prop}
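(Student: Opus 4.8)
The plan is to establish two facts and then combine them: that $\LL^{\circ}_{\mu}\subseteq\LL^{\circ}_{\v}$ as subspaces of $\LL^*$, and that the comultiplication $\D_{\mu}$ defined in \eqref{3-1-4} is exactly the restriction of $\D_{\v}=\v^*|_{\LL^{\circ}_{\v}}$ to $\LL^{\circ}_{\mu}$, with $\D_{\mu}(\LL^{\circ}_{\mu})\subseteq\LL^{\circ}_{\mu}\otimes\LL^{\circ}_{\mu}$. Together these say precisely that $(\LL^{\circ}_{\mu},\D_{\mu})$ sits inside $(\LL^{\circ}_{\v},\D_{\v})$ as a Lie subcoalgebra.

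The conceptual point driving the argument is that the Lie bracket $\v$ was realized in \eqref{1-1-2} as the associative product $\mu$ composed with fixed linear operators built from $\ptl$, ${\rm id}$, $\tau$ and $T$. Dualizing each of the three defining identities therefore expresses $\v^*$ as $\mu^*$ followed by the duals of those operators. First I would verify that the formulas \eqref{3-1-4} defining $\D_{\mu}$ are nothing but this dualization restricted to $\LL^{\circ}_{\mu}$, that is, $\D_{\mu}=\v^*|_{\LL^{\circ}_{\mu}}$. The mixed piece $[L_m,I_n]$, which carries the twist $(\tau,{\rm id})$, is the only delicate one, and the displayed computation immediately following \eqref{3-1-4} already checks it against the pairing on basis elements $(x^m,x^n)\otimes(x^k,x^l)$; the two pure pieces coming from $[L_m,L_n]$ and $[I_m,I_n]$ are treated identically and are simpler.

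Granting $\D_{\mu}=\v^*|_{\LL^{\circ}_{\mu}}$, the containment $\LL^{\circ}_{\mu}\subseteq\LL^{\circ}_{\v}$ follows. Let $f=(f_1,f_2)\in\LL^{\circ}_{\mu}=\AA^{\circ}\oplus\AA^{\circ}$, the decomposition being supplied by Lemma \ref{lem-3-1}. Then $\mu^{\circ}(f)\in\LL^{\circ}_{\mu}\otimes\LL^{\circ}_{\mu}$ by definition of the maximal good subspace. Applying the dual operators occurring in \eqref{3-1-4} keeps us inside $\LL^{\circ}_{\mu}\otimes\LL^{\circ}_{\mu}$: the operator $\ptl^{\circ}=\ptl^*|_{\AA^{\circ}}$ satisfies $\ptl^{\circ}(\AA^{\circ})\subseteq\AA^{\circ}$ (established just before Lemma \ref{lem-3-2}), while $\tau$, $T$ and ${\rm id}$ merely permute or select the two copies of $\AA^{\circ}$. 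Hence $\v^*(f)=\D_{\mu}(f)\in\LL^{\circ}_{\mu}\otimes\LL^{\circ}_{\mu}\subseteq\LL^*\otimes\LL^*$. By Proposition \ref{Th111}(2) we have $\LL^{\circ}_{\v}=(\v^*)^{-1}(\LL^*\otimes\LL^*)$, so $f\in\LL^{\circ}_{\v}$, giving $\LL^{\circ}_{\mu}\subseteq\LL^{\circ}_{\v}$.

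With both ingredients in hand the conclusion is immediate: since $\D_{\mu}=\v^*|_{\LL^{\circ}_{\mu}}$ and $\LL^{\circ}_{\mu}\subseteq\LL^{\circ}_{\v}$, we obtain $\D_{\mu}=\D_{\v}|_{\LL^{\circ}_{\mu}}$, and the computation above shows $\D_{\mu}(\LL^{\circ}_{\mu})\subseteq\LL^{\circ}_{\mu}\otimes\LL^{\circ}_{\mu}$, so $\LL^{\circ}_{\mu}$ is stable under $\D_{\v}$. Therefore $(\LL^{\circ}_{\mu},\D_{\mu})$ is a Lie subcoalgebra of $(\LL^{\circ}_{\v},\D_{\v})$. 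I expect the main obstacle to be the bookkeeping in the first two steps, namely confirming that the dual operators $\ptl^*,\tau,T,{\rm id}$ keep $\mu^{\circ}(f)$ inside $\AA^{\circ}\oplus\AA^{\circ}$ componentwise, and that \eqref{3-1-4} genuinely reproduces $\v^*$ rather than merely resembling it; once the twisted mixed term is pinned down, the remainder is routine.
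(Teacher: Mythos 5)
Your proposal is correct and takes essentially the same route as the paper: the heart of both arguments is to dualize the realization \eqref{1-1-2} of the bracket $\v$ as $\mu$ composed with the fixed operators built from $\ptl$, ${\rm id}$, $\tau$, $T$, and to conclude $\D_{\v}(f)=\D_{\mu}(f)$ for $f=(f_1,f_2)\in\LL^{\circ}_{\mu}$. If anything you are more complete than the paper's proof, which records only this computation and leaves implicit both the containment $\LL^{\circ}_{\mu}\subseteq\LL^{\circ}_{\v}$ (your appeal to Proposition \ref{Th111}(2)) and the stability $\D_{\mu}(\LL^{\circ}_{\mu})\subseteq\LL^{\circ}_{\mu}\otimes\LL^{\circ}_{\mu}$ via $\ptl^{\circ}(\AA^{\circ})\subseteq\AA^{\circ}$.
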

\begin{proof}
For $f\in \LL^{\circ}_{\mu}=\AA^{\circ}_{\mu}
\oplus \AA^{\circ}_{\mu},\ f=(f_1, f_2),\ f_1,\ f_2 \in
\AA^{\circ}_{\mu},$ then
\begin{eqnarray}\begin{array}{lll}\D_{\v} (f) &=& \D_{\v}
\big((f_1, 0) + (0, f_2)\big)\\
& =& \Big(\big(\mu\big(({\rm id}, 0) \otimes (\ptl, 0)- (\ptl, 0) \otimes
({\rm id},0)\big)\big)^{\ast}\Big)\big((f_1,
0)\big) \\
&&+ (\mu \cdot((\tau, id)\cdot(((0, id)\otimes (0, \ptl)))-(id, \tau)\cdot( (0, \ptl)\otimes (0, id))))^{\ast}(0,
f_2)\\
&=& \sum\limits_{f_1} \Big((f_1^{(1)}, 0) \otimes \big(\ptl^*(f_1^{(2)}),
0\big)-\big(\ptl^*(f_1^{(1)}), 0\big) \otimes (f_1^{(2)}, 0)\Big)\\
&&+ \sum\limits_{f_2} \big((f_2^{(1)}, 0)\otimes (0,
\ptl^{\ast}(f_2^{(2)})-(0, \ptl^{\ast}(f_2^{(1)})) \otimes (f_2^{(2)}, 0)\big)\\&=&\D_{\mu}(f).\\[-29pt]
\end{array}
\end{eqnarray}\end{proof}

Let $\AA$ be an associative commutative
algebra over a field $\cal{K}$, ${\cal L} = \AA \oplus \AA$ be the direct
sum of $\AA$ with itself. Then $({\cal L}, \mu)$ is also an associative
commutative algebra with the multiplication $\mu\big((a, b), (c, d)\big) =
\big(\mu_1(a, c), \mu_2(b,d)\big)\in {\cal L}$ (where $\mu_1=\mu_2$ are
multiplications of the first and second copy $\AA$ of $\cal L$). Denote
${\rm Der}(\AA)$ the derivation vector space of $\AA$. For $\ptl \in
{\rm Der}(\AA)$, define the bracket $\v$ as following.
\begin{equation}\label{b-1-1}\begin{array}{l}\v\big((a,0), (b, 0)\big)= \Big(\mu\big(({\rm id}, 0) \otimes (\ptl,
0)-(\ptl, 0) \otimes ({\rm id},0)\big)\Big)\big((a,0), (b, 0)\big),\\[4pt]
 \v\big((0,c), (0,
d)\big)=\mu\Big((0, {\rm id})\otimes (0, {\rm id})-\big((0, {\rm id})\otimes (0, {\rm id})\big)\cdot
T\Big)\big((0,c), (0, d)\big)=0, \\[4pt]
\v\big((a,0), (0, c)\big) =\mu\Big(\big((0, {\rm id})\otimes(0, \ptl)\big)\cdot(\tau,
{\rm id})\Big)\big((a,0), (0, c)\big),\\
\v\big((0,c), (a, 0)\big) =-\v\big((a,0), (0, c)\big) .\end{array}\end{equation}
Then $({\cal L}, \v)$ is a Lie algebra. For convenience, we denote ${\cal L}
= \AA_1 \oplus \AA_2,\ \AA_1 = \AA_2 =\AA.$

\begin{theo}\label{Th-3-1}
Let $\cal{L}=(\AA_1 \otimes \AA_2, \mu)$ be the above commutative
associative algebra over any field ${\cal K}$ with characteristic
different from $2$, and $(\cal{L}, \v)$ be the Lie algebra defined by
\eqref{b-1-1}. If there exists $H=(h, h) \in {\cal L}$ such that the idea
of $({\cal L}, \mu)$ which is generated by
$\big(2\ptl(h), \ptl(h)\big)$ has a finite codimension, then
${\cal L}^{\circ}_{\mu} = {\cal L}^{\circ}_{\v}.$ In particular, $\LL^{\circ}_{\mu} = \LL^{\circ}_{\v}.$
\end{theo}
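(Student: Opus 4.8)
The plan is to prove the two inclusions ${\cal L}^{\circ}_{\mu}\subseteq{\cal L}^{\circ}_{\v}$ and ${\cal L}^{\circ}_{\v}\subseteq{\cal L}^{\circ}_{\mu}$ separately. The first is immediate: the Proposition preceding this theorem exhibits $({\cal L}^{\circ}_{\mu},\D_{\mu})$ as a Lie subcoalgebra of $({\cal L}^{\circ}_{\v},\D_{\v})$, so in particular ${\cal L}^{\circ}_{\mu}\subseteq{\cal L}^{\circ}_{\v}$ as subspaces of ${\cal L}^{*}$. Everything therefore reduces to the reverse inclusion, for which I would use the intrinsic descriptions supplied by Proposition \ref{Th111}(1): a functional lies in ${\cal L}^{\circ}_{\v}$ exactly when its space of Lie-translates $f\cdot{\cal L}$ is finite dimensional, whereas by Lemma \ref{lem-3-1} one has ${\cal L}^{\circ}_{\mu}=\AA^{\circ}\oplus\AA^{\circ}$, so that $f=(f_{1},f_{2})\in{\cal L}^{\circ}_{\mu}$ precisely when each multiplicative translate family $\{\,b\mapsto f_{i}(ab)\mid a\in\AA\,\}$ $(i=1,2)$ is finite dimensional. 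Thus it suffices to start from $f=(f_{1},f_{2})$ with $f\cdot{\cal L}$ finite dimensional and deduce $f_{1},f_{2}\in\AA^{\circ}$.

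First I would read off the Lie-translates from \eqref{b-1-1}. On the generators one finds that $f\cdot(a,0)$ has first component $b\mapsto f_{1}(a\ptl b-(\ptl a)b)$ and second component $c\mapsto f_{2}(a\ptl c)$, while $f\cdot(0,c)$ has first component $b\mapsto-f_{2}((\ptl c)b)$ and vanishing second component. Projecting the finite dimensional space $f\cdot{\cal L}$ onto the two copies of $\AA^{*}$ then forces, in particular, the Witt-type family $\{\Phi_{a}:b\mapsto f_{1}(a\ptl b-(\ptl a)b)\mid a\in\AA\}$ and the family $\{\beta_{a}:c\mapsto f_{2}(a\ptl c)\mid a\in\AA\}$ to be finite dimensional. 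The decisive feature is that these only control translates of $f_{1},f_{2}$ along directions in the image $\ptl\AA$ of the derivation; to finish I must promote this to control along all of $\AA$, which is exactly what the hypothesis is for.

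Put $w=\ptl h$. Since ${\cal L}=\AA\oplus\AA$ is a direct sum of algebras, the ideal generated by $(2\ptl(h),\ptl(h))$ is $(\AA\cdot2w)\oplus(\AA w)$, which (as the characteristic is $\neq 2$) equals $(\AA w)\oplus(\AA w)$; hence the hypothesis says precisely that the principal ideal $\AA w$ has finite codimension in $\AA$. The heart of the argument is to express the translate of $f_{i}$ along this ideal, namely $b\mapsto f_{i}((aw)b)$, through the finitely many controlling functionals of the previous paragraph, \emph{without} reintroducing another $\AA w$-translate. Using the product rule $\ptl(pq)=(\ptl p)q+p\ptl q$ one obtains for the second component the clean identity $f_{2}((aw)b)=\beta_{a}(hb)-\beta_{ah}(b)$, whose right-hand side is the fixed $h$-translate of a member of $\{\beta_{a}\}$ minus a member of $\{\beta_{a}\}$, hence ranges in a finite dimensional space as $a$ varies. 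For the first component the analogous manipulation gives $2\,f_{1}((aw)b)=\Phi_{a}(hb)-\Phi_{ah}(b)$; here the factor $2$ must be inverted, which is the precise point where the characteristic $\neq 2$ hypothesis enters, and again the right-hand side ranges in a finite dimensional space. Consequently $\{\,b\mapsto f_{i}((aw)b)\mid a\in\AA\,\}$ is finite dimensional for $i=1,2$.

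Finally, writing $\AA=\AA w+F$ with $F$ finite dimensional (finite codimension of $\AA w$), the full translate family $\{\,b\mapsto f_{i}(ab)\,\}$ differs from the $\AA w$-family only by the finitely many translates along $F$, so it too is finite dimensional; thus $f_{1},f_{2}\in\AA^{\circ}$, giving ${\cal L}^{\circ}_{\v}\subseteq{\cal L}^{\circ}_{\mu}$ and hence equality. Specializing to $\AA=\C[x^{\pm1}]$, $\ptl=\frac{d}{dx}$ and $h=x$ (so $w=1$ and $\AA w=\AA$) then yields $\LL^{\circ}_{\mu}=\LL^{\circ}_{\v}$. I expect the main obstacle to be the two product-rule identities of the third paragraph: a naive integration by parts merely reproduces the tautology $0=0$, and the real trick is to route the computation through the fixed generator $h$ so that each $\AA w$-translate becomes an $h$-translate of a controlled family member plus a controlled family member. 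Once this decoupling is achieved the finite-codimension hypothesis upgrades control along $\AA w$ to control along all of $\AA$ automatically.
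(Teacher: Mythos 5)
Your proposal is correct and follows essentially the same route as the paper: your identities $f_2((aw)b)=\beta_a(hb)-\beta_{ah}(b)$ and $2f_1((aw)b)=\Phi_a(hb)-\Phi_{ah}(b)$ are precisely the componentwise form of the paper's key computation $\big((f*w)\cdot H-f*\mu(w,H)\big)=f\cdot\big(2w_1v_1\ptl(h),\,w_1v_2\ptl(h)+v_1w_2\ptl(h)\big)$ in \eqref{eq-3-2}--\eqref{eq-3-3} (your specialization $w=(a,0)$ even avoids the cross term $(f_2\cdot(w_2\ptl(h)))\cdot\tau$ that the paper must dispose of), and the finite-codimension upgrade from $\AA\ptl(h)$-translates to all of $\AA$, the use of characteristic $\neq 2$ to invert the factor $2$, and the appeal to the preceding Proposition for the inclusion ${\cal L}^{\circ}_{\mu}\subseteq{\cal L}^{\circ}_{\v}$ all match the paper's argument.
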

\begin{proof}
Since for $f\in {\cal L}^{\ast} = (\AA_1 \oplus
\AA_2)^{\ast}$ and $(a_1, a_2) \in \AA_1 \oplus \AA_2,$ we have
$f\big((a_1, a_2)\big)= f\big((a_1, 0)\big) + f\big((0, a_2)\big)$. Set $f\big((a_1, 0)\big) =
\big(f_1(a_1),0\big)=f_1(a_1)$, $f\big((0, a_2)\big)= \big(0,f_2(a_2)\big)=f_2(a_2)$.
It follows that $\ f_i \in \AA_i^{\ast},\ i=1,2 $, and we get ${\cal L}^{\ast}=
(\AA_1 \oplus \AA_2)^{\ast} = \AA_1^{\ast} \oplus \AA_2^{\ast}.$

Denote by $\cdot$ and $*$ the actions of $({\cal L}, \mu)$ and $({\cal L}, \v)$
on ${\cal L}^{\ast}$, respectively. Then for $f \in {\cal L}^{\ast},$ $w,v\in
{\cal L}$, we have $(f\cdot w)(v) = f\big(\mu(w, v)\big),\ (f*w)(v)= f\big(\v(w, v)\big).$ Let
$w=(w_1, w_2),$ $v=(v_1, v_2),\, H=(h, h) \in{\cal L}$, we obtain
\begin{eqnarray}\label{eq-3-2}
\v\big(w, \mu(v, H)\big) -\v\big(\mu(w, H), v\big)
&\!\!=\!\!&\v\big((w_1,w_2), (v_1h, v_2h)\big) -\v\big((w_1h, w_2h), (v_1, v_2)\big)\nonumber\\
&\!\!=\!\!&\big(w_1\ptl(v_1h) - \ptl(w_1)v_1h, w_1\ptl(v_2h) -
v_1h\ptl(w_2)\big)\nonumber\\
&&- \big(w_1h\ptl(v_1) - \ptl(w_1h) v_1, w_1h\ptl(v_2) -
v_1\ptl(w_2h)\big)\nonumber\\
&\!\!=\!\!&\big(2w_1v_1\ptl(h), w_1v_2\ptl(h) + v_1w_2\ptl(h)\big).
\end{eqnarray}
%
%
By  (\ref{eq-3-2}) and
\begin{equation}
 \begin{array}{lll}\big((f*w)\cdot H - f*\mu
(w, H)\big)(v_1, 0) &\!\!\!=\!\!\!& f\big(2w_1v_1\ptl(h),  v_1w_2\ptl(h)\big)\\
&\!\!\!=\!\!\!& \Big(f\cdot \big(2w_1\ptl(h), 0\big) + f\cdot\big(0, w_2 \ptl(h)\big)\cdot \tau\Big)
(v_1, 0),\\
\big((f*w)\cdot H - f*\mu (w, H))(0,v_2)\big) &\!\!\!=\!\!\!& f\big(0, w_1v_2\ptl(h)\big)\\ &\!\!\!=\!\!\!&
f\cdot\big(0, w_1\ptl(h)\big)(0, v_2),\nonumber
\end{array}
\end{equation}
%
we obtain
\begin{equation}\label{eq-3-3}\begin{array}{l}
\big((f*w)\cdot H - f*\mu (w, H)\big)_1=f_1\cdot\big(2w_1\ptl(h)\big)+ \Big(f_2\cdot\big(
w_2\ptl(h)\big)\Big)\cdot \tau,\\
\big((f*w)\cdot H - f*\mu (w, H)\big)_2=f_2\cdot\big(w_1\ptl(h)\big),
\end{array}
\end{equation}
where the subscript ``\,$_i$\,'' denotes the $i$-th coordinate of an element for $i=1,2$.
 If $f \in {\cal L}^{\circ}_{\v},$ i.e., $f *{\cal L}$ is finite dimensional,
then the left sides of (\ref{eq-3-3}) are in finite
dimensional subspaces. If the idea $\big(\AA \ptl(h)\big)$ of $\AA$ generalized by
$\ptl(h)$ has finite codimension in $\AA$, then the second equation of \eqref{eq-3-3} shows that $f_2 \cdot \AA$ is
finite dimensional, i.e., $f_2 \in \AA^{\circ}$. Moreover, if the
idea $\big(\AA\big(2\ptl(h)\big)\big)$ of $\AA$ which is generalized by $2\ptl(h)$ also has
finite codimension, then the first equation of \eqref{eq-3-3} shows that $f_1 \cdot \AA$ is finite
dimensional, i.e., $f_1 \in\AA^{\circ}$.
\end{proof}

By Theorem \ref{Th-3-1}, we have $\LL^{\circ}_{\mu} =
\LL^{\circ}_{\v}$, which is now denoted by $\LL^{\circ}$. By Lemmas \ref{lem-3-2} and \ref{lem-3-1}, we have
\begin{eqnarray}\label{MaxHVir}
\!\!\!\!\!\!\!\!\!\!\!\!&\!\!\!\!\!\!\!\!\!\!\!\!&
\LL^{\circ}={\cal A}^{\circ}\oplus{\cal A}^{\circ} \mbox{ with }{\cal A}=\F[x^{\pm1}]\mbox{ and }\\
\!\!\!\!\!\!\!\!\!\!\!\!&\!\!\!\!\!\!\!\!\!\!\!\!&
{\cal A}^{\circ}=\Big\{f=\mbox{$\sum\limits_{i\in\Z}$}f_i\vp^i\in{\cal A}^*\,\Big|\,\exists\, r\in\N,c_j\in\F\mbox{ such that }
f_n=\mbox{$\sum\limits_{j=1}^r$}c_jf_{n-j},\,\forall\,n\in\Z\Big\}.\nonumber
\end{eqnarray}


\section{Dual \ Lie \ bialgebras \ of \ the \ twisted \ Heisenberg-Virasoro \ types
}\setcounter{section}{3}\setcounter{theo}{0}\setcounter{equation}{0}

As stated in the introduction, the Heisenberg-Virasoro algebra was first studied in
\cite{A}, it is  an important algebra structure which has close
relations with the Heisenberg algebra and the Virasoro algebra, and has also
some relations with the full-toroidal Lie algebras and conformal
algebras (see, e.g., \cite{S}). The representations of the twisted
Heisenberg-Virasoro algebra were studied by some authors (see
\cite{B, L-C, L-W, L-Z}). The authors of \cite{L-P} investigated the
Lie bialgebra structures of the twisted Heisenberg-Virasoro algebra. In
 this section, we will investigate the dual Lie bialgebra
structures of the twisted Heisenberg-Virasoro type. First, we recall some
results which are related to the Heisenberg-Virasoro Lie bialgebra
(see, e.g., \cite{9,L-P}).

\begin{prop}\label{p4-1}\begin{enumerate}
\item[\rm(1)] Let $\cal{W}$ be the classical Witt $($or Virasoro$)$ algebra $($i.e., $\cal{W}\!=\!\F[t,t^{-1}]$ such that
 $[f,g]=f\frac{dg}{dt}-g\frac{df}{dt}$ for $f,g\in\cal{W}$$)$.
\begin{itemize}\item[\rm(i)]Every Lie bialgebra structure on $\cal{W}$
 is  coboundary triangular associated to a solution $r$ of CYBE \eqref{CYBE} of the form
 $r= a\otimes b- b\otimes a$ for some nonzero $a, b \in\cal{W}$ satisfying \begin{equation}\label{2dim}
 [a,b]=k{\ssc\,}b\mbox{ \ for some \ }0\ne k\in\F.\end{equation}
\item[\rm(ii)]
Let  $\gg $ be an infinite dimensional Lie subalgebra of $\cal{W}$
such that $t \in \gg $ and $\gg  \ncong \cal{W}$ as Lie algebras.
Denote by $\gg ^{(n)}$ the Lie bialgebra defined on $\gg $
associated to the solution $r_n = t \otimes t^n - t^n \otimes t$ of
 CYBE for any $t^n \in \gg $. Then every Lie bialgebra structure
on $\gg $ is isomorphic to $\gg ^{(n)}$  for some $n$ with $t^n \in
\gg $.
\end{itemize}
\item[\rm(2)] Let $(\LL, \v)$  be the centerless twisted Heisenberg-Virasoro algebra defined by $(\ref{1-1-2}),$ and
 $(\LL, \v, \D)$ be a Lie biagebra. Then $\D = \D_r+\d$ such that $\D_r$ is defined by
$\D_r (x) = x\cdot r \in \LL \otimes \LL$ for some $r \in {\rm Im}(1-\tau)$, where $\tau$ is defined by
$\tau(a\otimes b) = b\otimes a$, and $\d$ is defined by \begin{equation}\label{d-defby}\d(L_n) = (n\a + \g)(I_0 \otimes
I_n - I_n \otimes I_0),\ \d(I_n) = \b(I_0 \otimes I_n - I_n \otimes
I_0),\end{equation} for some fixed $\a,\,\b,\,\g \in\C.$ Furthermore, $(\LL, \v, \d)$
is a Lie
bialgebra. 
%
%
%
%
%
\end{enumerate}
\end{prop}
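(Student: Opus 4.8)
The plan is to treat both parts through the standard cohomological framework for Lie bialgebras and then isolate the features special to the twisted Heisenberg–Virasoro case. Throughout, a Lie bialgebra structure on $\LL$ is the same datum as a linear map $\D\colon\LL\to\LL\otimes\LL$ with image in $\Im(1-\tau)$ that is (a) a $1$-cocycle for the adjoint action $x\cdot(y\otimes z)=[x,y]\otimes z+y\otimes[x,z]$, i.e. $\D[x,y]=x\cdot\D(y)-y\cdot\D(x)$, and (b) whose dual $\D^{\ast}$ obeys the Jacobi identity (the co-Jacobi condition). The coboundaries $\D_r(x)=x\cdot r$ are automatically cocycles, so the obstruction to $\D$ being coboundary lies in $H^1(\LL,\wedge^2\LL)$; and for a coboundary $\D_r$ the co-Jacobi condition is equivalent to $x\cdot C(r)=0$ for all $x$, i.e. to $C(r)$ being $\LL$-invariant, with $C(r)$ as in \eqref{CYBE}.

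For Part (1) I would invoke the Witt/Virasoro classification of \cite{8,9}. First, $H^1(\cal{W},\cal{W}\otimes\cal{W})=0$, so every cobracket is a coboundary $\D_r$ with $r\in\Im(1-\tau)$. Next, grading $\cal{W}$ by the eigenvalues of $\ad L_0$ and forcing $C(r)$ to be invariant shows $C(r)=0$, so every structure is triangular. A short analysis of rank-two solutions of \eqref{CYBE} lying in $\Im(1-\tau)$ then yields $r=a\otimes b-b\otimes a$ with $[a,b]=kb$ and $k\ne0$, which is (i). Part (ii) is the same argument carried out inside the subalgebra $\gg$: the hypotheses $t\in\gg$ and $\gg\ncong\cal{W}$ restrict the admissible $r$ to the family $r_n=t\otimes t^n-t^n\otimes t$, giving $\gg^{(n)}$ up to isomorphism.

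For Part (2) the essential new point is that, unlike the Virasoro case, the twisted Heisenberg–Virasoro algebra has nonzero $H^1(\LL,\wedge^2\LL)$, and the maps $\d$ in \eqref{d-defby} represent its classes. I would grade $\LL$ by $\ad L_0$-weight (so $L_n,I_n$ have weight $n$), decompose a cocycle $\D$ into homogeneous components using the cocycle identity $\D[L_0,x]=L_0\cdot\D(x)-x\cdot\D(L_0)$, and then feed the remaining cocycle identities against $L_m$ and $I_m$ (via the brackets \eqref{1-1-1}) to obtain recursions in $n$ for $\D(L_n)$ and $\D(I_n)$. Solving these recursions, and subtracting a suitable coboundary $\D_r$ to clear the components lying along the $L$-directions, should leave exactly the three-parameter family supported on the abelian Heisenberg directions, namely $\d(L_n)=(n\a+\g)(I_0\otimes I_n-I_n\otimes I_0)$ and $\d(I_n)=\b(I_0\otimes I_n-I_n\otimes I_0)$; this gives the asserted decomposition $\D=\D_r+\d$.

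The main obstacle is the graded bookkeeping in Part (2): showing that once the coboundary is removed the residual cocycle is forced into the Heisenberg directions and into precisely this shape, with no further parameters than $\a,\b,\g$. For the final ``Furthermore'' claim I would then verify directly that $\d$ by itself defines a Lie bialgebra. That $\d$ is a $1$-cocycle is a direct check from \eqref{1-1-1}; and since $\Im(\d)$ lies in the span of the $I_m$, which form an abelian ideal, every bracket entering the co-Jacobi expression for $\d^{\ast}$ vanishes, so the co-Jacobi identity holds trivially and $(\LL,\v,\d)$ is a Lie bialgebra.
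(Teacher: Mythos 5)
Your proposal should first be measured against what the paper actually does: Proposition \ref{p4-1} is presented there as a \emph{recollection of known results} with no proof at all --- part (1) is the Taft and Ng--Taft classification \cite{8,9}, and part (2) is the main theorem of Liu--Pei--Zhu \cite{L-P} --- so the only meaningful comparison is with those cited sources. At the level of strategy your outline does track that literature: the cohomological formulation, the $\ad L_0$-grading, the observation that for the twisted Heisenberg--Virasoro algebra $H^1$ with coefficients in the tensor square is nonzero with representatives exactly the maps $\delta$ of \eqref{d-defby}, are all the mechanism of \cite{9} and \cite{L-P}. But be aware that for part (2) you have written a plan, not a proof: the ``graded bookkeeping'' you defer --- showing the recursions force the residual cocycle into the Heisenberg directions with precisely the three parameters $\alpha,\beta,\gamma$ --- is the entire content of the classification in \cite{L-P}, so as a self-contained argument the proposal is incomplete exactly where the work lies.

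The one concrete error is your justification of the ``Furthermore'' claim. The co-Jacobi identity for $\delta$ is $(1+\xi+\xi^2)(\delta\otimes{\rm id})\delta=0$, where $\xi$ is the cyclic permutation of the triple tensor product; it involves iterating $\delta$, not brackets in the Lie algebra, and it does \emph{not} hold trivially. Indeed, since $\delta(I_n)=\beta(I_0\otimes I_n-I_n\otimes I_0)$ is nonzero for $\beta\ne0$ while $\delta(I_0)=0$, one computes $(\delta\otimes{\rm id})\delta(L_n)=(n\alpha+\gamma)\beta\,(I_n\otimes I_0\otimes I_0-I_0\otimes I_n\otimes I_0)\ne0$, and similarly for $(\delta\otimes{\rm id})\delta(I_n)$ with coefficient $-\beta^2$; the identity holds only because the two monomials lie in the same $\xi$-orbit $\{I_n\otimes I_0\otimes I_0,\ I_0\otimes I_0\otimes I_n,\ I_0\otimes I_n\otimes I_0\}$, so their cyclic symmetrizations cancel. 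Equivalently, on the dual side the bracket induced by $\delta$ is certainly not zero --- the paper computes it in Theorem \ref{Th-4-5} and obtains $[(0,\vp^i),(0,\vp^j)]=(j\alpha+\gamma)(\vp^{j+1},0)+\beta(0,\vp^j)$ in the nontrivial case --- which directly contradicts your assertion that ``every bracket entering the co-Jacobi expression for $\delta^{\ast}$ vanishes.'' The repair is the two-line cyclic-cancellation computation just indicated; your verification that $\delta$ is a $1$-cocycle for the bracket \eqref{1-1-1} is the right check and does go through as you describe.
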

\begin{rema}\rm
It is proved in \cite{4} that if two elements $a,b$ in a Lie
algebra $({\cal L}, [\cdot, \cdot])$ satisfy \eqref{2dim},
then $r= a \otimes b - b\otimes a$ is a solution of CYBE, and one obtains
a coboundary triangular Lie bialgebra $({\cal L} , [\cdot, \cdot],
\D_r)$ by defining $\D_r: {\cal L} \rar {\cal L} \otimes {\cal L}$
as $\D_r (z) = z \cdot r$ for $z\in {\cal L}$.
Proposition \ref{p4-2} below shows that for $a=(x, 0), b=(x, qx^m) \in \LL$ with $0\ne q\in \F,\,m\in \Z$,
even though \eqref{2dim} does not hold, we still have a  solution of CYBE $r=a \otimes b - b\otimes a$. Thus, \eqref{2dim} is not the necessary condition for
$r=a \otimes b - b\otimes a$ to be a  solution of CYBE.
\end{rema}
\begin{prop}\label{p4-2} Let $(\LL, \v)$ be the centerless twisted
Heisenberg-Virasoro algebra defined by $(\ref{1-1-2})$. Then $r= (x, 0) \otimes (x^{m+1}, qx^n) - (x^{m+1}, qx^n)
\otimes (x, 0)$ with $m,n\in\Z,\,q
\in \F$ is a solution of CYBE if
and only if one of the following holds$:$ $(1)\ m=n;\ (2)\ m=0;\ (3)\ q=0$. Furthermore, $r= (x, 0) \otimes (0, qx^n) - (0, qx^n)\otimes(x, 0)$
is a solution of CYBE.
%
%
%
\end{prop}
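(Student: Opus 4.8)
The plan is to translate everything into the Lie-algebraic language of \eqref{1-1-1}. Writing $a=(x,0)=L_0$ and $b=(x^{m+1},qx^n)=L_m+qI_n$, the claim concerns the skew tensor $r=a\otimes b-b\otimes a$. The first step is to record the brackets $[L_0,L_m]=mL_m$ and $[L_0,I_n]=nI_n$, and then to split $b=b_1+b_2$ with $b_1=L_m$ and $b_2=qI_n$. The point of this splitting is that $b_1$ and $b_2$ are eigenvectors of $\ad L_0$ with eigenvalues $m$ and $n$ respectively, so that $[a,b]=mb_1+nb_2$.

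The key reduction is to use that $C$ from \eqref{CYBE} is a quadratic map in $r$. Setting $r^{(1)}=a\otimes b_1-b_1\otimes a$ and $r^{(2)}=a\otimes b_2-b_2\otimes a$, we have $r=r^{(1)}+r^{(2)}$, whence $C(r)=C(r^{(1)})+C(r^{(2)})+(\text{bilinear cross term})$. Since $[a,b_1]=mb_1$ and $[a,b_2]=nb_2$ each satisfy \eqref{2dim} (or vanish when $m=0$ or $n=0$), the Remark above, i.e.\ the result of \cite{4}, gives $C(r^{(1)})=C(r^{(2)})=0$. Thus $C(r)$ collapses to the cross term alone, which is the one genuine computation.

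To evaluate it I would expand the standard six-term expression obtained from $[r_{12},r_{13}]+[r_{12},r_{23}]+[r_{13},r_{23}]$, namely $[a,b]\otimes a\otimes b-[a,b]\otimes b\otimes a-a\otimes[a,b]\otimes b+b\otimes[a,b]\otimes a+a\otimes b\otimes[a,b]-b\otimes a\otimes[a,b]$, substitute $[a,b]=mb_1+nb_2$ and $b=b_1+b_2$, and discard the pure-$b_1$ and pure-$b_2$ parts (these are exactly $C(r^{(1)})$ and $C(r^{(2)})$). Collecting the remaining mixed monomials, each carrying one each of $a,b_1,b_2$, should collapse everything into a single totally antisymmetric tensor: I expect
\[
C(r)=(n-m)\,(a\wedge b_1\wedge b_2)=(n-m)\,q\,(L_0\wedge L_m\wedge I_n),
\]
where $a\wedge b_1\wedge b_2$ is the alternating sum over the six orderings. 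This sign-and-coefficient bookkeeping is the only error-prone step, so I would verify it by tracking a single monomial, e.g.\ confirming that the coefficient of $b_1\otimes a\otimes b_2$ is $m-n$.

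Finally I would read off the vanishing locus. Since $\F$ has characteristic zero, $n-m=0$ in $\F$ precisely when $m=n$ (case (1)), while $q=0$ is case (3). The remaining factor $L_0\wedge L_m\wedge I_n$ vanishes exactly when $L_0,L_m,I_n$ are linearly dependent; as $I_n=(0,x^n)$ lies in the second summand whereas $L_0=(x,0)$ and $L_m=(x^{m+1},0)$ lie in the first, dependence forces $L_0$ and $L_m$ to be proportional, i.e.\ $m=0$ (case (2)). This gives the stated equivalence. For the furthermore part, $r=(x,0)\otimes(0,qx^n)-(0,qx^n)\otimes(x,0)$ has $b=qI_n$ a single $\ad L_0$-eigenvector with $[L_0,qI_n]=n\,qI_n$, so \eqref{2dim} (or $[a,b]=0$ when $n=0$) applies directly and $C(r)=0$ with no further condition.
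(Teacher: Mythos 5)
Your proof is correct, but it is organized quite differently from the paper's. The paper's proof is a single brute-force computation: it expands $C(r)=[r_{12},r_{13}]+[r_{12},r_{23}]+[r_{13},r_{23}]$ directly and displays the six surviving terms, each $(m-n)$ or $(n-m)$ times a tensor monomial in $(x,0)$, $(x^{m+1},0)$, $(0,qx^n)$, and then simply asserts that this vanishes iff one of the listed cases occurs. Your route — splitting $b=L_m+qI_n$ into $\ad L_0$-eigenvectors, killing the pure parts $C(r^{(1)})$, $C(r^{(2)})$ via the cited result of Michaelis (the paper's own Remark), and recognizing the surviving cross term as $(n-m)q\,(L_0\wedge L_m\wedge I_n)$ — reproduces exactly the paper's displayed tensor, since that display is precisely $(n-m)$ times the alternating sum over the six orderings of $(x^{m+1},0)$, $(0,qx^n)$, $(x,0)$ (your coefficient check of $b_1\otimes a\otimes b_2$ getting $m-n$ agrees with the paper's second term). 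Your packaging buys two genuine improvements: it explains conceptually why only the mixed monomials survive, and it makes the ``only if'' direction rigorous, which the paper leaves implicit — note that when $m=0$ or $m=n$ the six monomials in the paper's display are no longer distinct basis tensors, so one cannot naively read off vanishing coefficient-by-coefficient, whereas your observation that a wedge of three vectors vanishes iff they are linearly dependent (and that $I_n$ lives in the second summand, forcing $L_0\parallel L_m$, i.e.\ $m=0$) handles all degenerations uniformly. Your treatment of the ``furthermore'' statement and of the degenerate cases $[a,b]=0$ (where every term of the six-term expansion contains $[a,b]$, so $C(r)=0$ trivially) is likewise correct.
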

\begin{proof}
By computation, we can get \begin{eqnarray}
&&[r_{12}, r_{13}] + [r_{12}, r_{23}] + [r_{13},
r_{23}]\nonumber\\
&&\phantom{[r_{12},}=(n-m)(x^{m+1}, 0) \otimes (0, qx^n) \otimes (x, 0)+ (m-n)(x^{m+1}, 0)\otimes (x, 0)\otimes (0, qx^n)\nonumber\\
&&\phantom{[r_{12},=}+(m-n)(0, qx^n) \otimes (x^{m+1}, 0) \otimes (x, 0)-(m-n)(0, qx^n)\otimes (x, 0)\otimes (x^{m+1}, 0)\nonumber\\
&&\phantom{[r_{12},=}-(m-n)(x, 0) \otimes (x^{m+1}, 0)\otimes (0, qx^n)+(m-n)(x, 0)\otimes (0, qx^n)\otimes (x^{m+1}, 0).\nonumber
\end{eqnarray}


\noindent Then $[r_{12}, r_{13}] + [r_{12}, r_{23}] + [r_{13}, r_{23}]=0$ if
and only if one of the four cases in Proposition \ref{p4-2} occurs.
\end{proof}

The authors of \cite{16, SS}  constructed the dual Lie bialgebra
structures of Witt (Virasoro) and Poisson type Lie bialgebras.
The dual structures of loop type and current type Lie bialgebras were considered in \cite{SS-2}.
Now, we start to investigate the dual Lie
bialgebra structures of the twisted Heisenberg-Virasoro type.
Let $\LL\! =\! \AA_1 \oplus \AA_2$, $\AA_1\! =\! \AA_2\! =\!\C[x^{\pm 1}]$ and $\SS =
\{ (x^{m}, x^n)\, |\, m,\, n \in \Z\}$ be the standard basis of $\LL$.
Denote by $\SS^* = \{(\vp^i, \vp^j) \,|\, i,\, j \in \Z\}$ the set of
dual basis of $\SS$, i.e., $\langle(\vp^i, 0), (x^j, 0)\rangle
=\langle(0, \vp^i), (0, x^j)\rangle = \d_{i,j}$ for $i,j\in \Z,$
and $\langle(\vp^i, 0), (0 ,
x^j)\rangle \!=\! 0$, $\langle(0, \vp^i), (x^j , 0)\rangle = 0.$

\begin{theo}\label{Th-4-1}Let $(\LL, \v)$ be the twisted Heisenberg-Virasoro
algebra defined by $(\ref{1-1-2})$, and $(\LL^{\circ}, \D)$ be a Lie coalgebra,
 where $\LL^{\circ}$ is determined by Lemmas
$\ref{lem-3-2},\ \ref{lem-3-1}$ and Theorem $\ref{Th-3-1}$, then the
cobracket $\D$ is uniquely determined, for $m\in\Z$, by
\begin{enumerate}
\item[\rm(1)] $\D\big((\vp^m, 0)\big) =\sum_{i,j\in\Z,\,i+j=m+1}(j-i) (\vp^i, 0)\otimes
(\vp^j, 0),
$

\item[\rm(2)] $\D\big((0, \vp^m)\big) =\sum_{i,j\in\Z,\,i+j=m+1} (j(\vp^i,
0)\otimes (0, \vp^j)- i(0, \vp^i) \otimes (\vp^j, 0))
.$
\end{enumerate}
\end{theo}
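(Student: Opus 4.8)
The plan is to compute $\D$ directly from its definition as the dual cobracket $\D=\v^{\circ}=\v^{*}|_{\LL^{\circ}}$, where $\v^{*}$ is the transpose of the Lie bracket. By definition of the transpose, the value of $\D$ on any $f$ is pinned down by
$$\la \D f,\,\xi\otimes\eta\ra=\la f,\,\v(\xi,\eta)\ra\qquad\mbox{for all }\xi,\eta\in\LL,$$
and since the pairing is nondegenerate this already yields the asserted uniqueness: any cobracket dual to $\v$ must agree with $\D$. It then remains only to read off the structure constants, which I would do by pairing $\D(\vp^m,0)$ and $\D(0,\vp^m)$ against each of the four types of basis tensor $(x^a,0)\otimes(x^b,0)$, $(x^a,0)\otimes(0,x^b)$, $(0,x^a)\otimes(x^b,0)$, $(0,x^a)\otimes(0,x^b)$, and invoking the bracket table \eqref{1-1-2}.

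For part (1), the key observation is that $(\vp^m,0)$ annihilates everything in the second summand $\AA_2$; hence only basis tensors whose bracket has a nonzero first component can contribute. Among the four types, the only bracket landing in $\AA_1$ is $\v\big((x^a,0),(x^b,0)\big)=[L_{a-1},L_{b-1}]=(b-a)(x^{a+b-1},0)$, while the mixed and the $I$--$I$ brackets fall into $\AA_2$ or vanish. Pairing then gives $\la\D(\vp^m,0),(x^a,0)\otimes(x^b,0)\ra=(b-a)\,\d_{a+b,\,m+1}$ and zero on the other three types, which is precisely formula (1) after renaming $(a,b)=(i,j)$.

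For part (2) the roles switch: $(0,\vp^m)$ annihilates $\AA_1$, so I keep only the tensors whose bracket has a nonzero second component. These are the two mixed types, with $\v\big((x^a,0),(0,x^b)\big)=[L_{a-1},I_b]=b(0,x^{a+b-1})$ and $\v\big((0,x^a),(x^b,0)\big)=-[L_{b-1},I_a]=-a(0,x^{a+b-1})$, whereas the $L$--$L$ bracket sits in $\AA_1$ and the $I$--$I$ bracket vanishes. Reading off the coefficients produces exactly the two families $j(\vp^i,0)\otimes(0,\vp^j)$ and $-i(0,\vp^i)\otimes(\vp^j,0)$ with $i+j=m+1$, i.e.\ formula (2). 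The one point that demands care --- and the only genuine obstacle beyond the elementary pairing --- is that $(\vp^m,0)$ and $(0,\vp^m)$ are not themselves elements of $\LL^{\circ}$, since a single dual vector violates the recurrence of Lemma \ref{lem-3-2}. I would therefore read these identities as describing the full transpose $\v^{*}:\LL^{*}\to(\LL\otimes\LL)^{*}$ on the dual basis vectors; the good-subspace property $\v^{*}(\LL^{\circ})\subset\LL^{\circ}\otimes\LL^{\circ}$, valid because $\LL^{\circ}$ is a good subspace under $\v^{*}$ (Theorem \ref{Th-3-1} and \eqref{MaxHVir}), guarantees that on a genuine recurrence-satisfying $f=\sum_i f_i\vp^i$ the a priori infinite sums collapse into $\LL^{\circ}\otimes\LL^{\circ}$, so that $\D$ is well defined and obtained by extending (1) and (2) linearly.
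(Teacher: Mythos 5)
Your proposal is correct, and in substance it performs the same computation as the paper --- evaluating the transpose of the Lie bracket on the dual basis --- but it is organized differently. In its proof of this theorem the paper does not pair $\D$ directly against the four types of basis tensors; instead it dualizes the two building blocks of the realization \eqref{1-1-2} separately, computing $\mu_k^{\ast}(\vp^m)=\sum_{i+j=m}\vp^i\otimes\vp^j$ and $\ptl^{\ast}(\vp^m)=(m+1)\vp^{m+1}$ by pairing against monomials, and then substitutes these into the ready-made component formulas \eqref{eq-3-1} and \eqref{3-1-4} for $\D_{\mu}=\D_{\v}$. Your direct use of $\la\D f,\xi\otimes\eta\ra=\la f,\v(\xi,\eta)\ra$ together with the bracket table amounts to inlining the derivation of \eqref{3-1-4}, which the paper carried out in Section 2 by exactly this kind of pairing argument; so your route is more elementary and self-contained, while the paper's buys reusability of the associative-algebra machinery ($\mu^{\ast}$ and $\ptl^{\ast}$), which it exploits again in the later theorems. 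Two points where you go beyond the paper's write-up deserve mention. First, your uniqueness argument from nondegeneracy of the pairing between $(\LL\otimes\LL)^{\ast}$ and $\LL\otimes\LL$ is sound and supplies the ``uniquely determined'' clause that the paper leaves implicit. Second, your caveat that $(\vp^m,0)$ and $(0,\vp^m)$ do not themselves lie in $\LL^{\circ}$ is correct --- a singly supported sequence violates the two-sided recurrence of Lemma \ref{lem-3-2}, since at the top of the support the recurrence would force $1=0$ --- and the paper silently glosses over this; reading the displayed identities as describing the full transpose $\v^{\ast}$ on dual basis vectors, with $\D$ on genuine elements of $\LL^{\circ}$ obtained by linear extension and the good-subspace property $\v^{\ast}(\LL^{\circ})\subset\LL^{\circ}\otimes\LL^{\circ}$ guaranteed by Theorem \ref{Th-3-1} and \eqref{MaxHVir}, is indeed the right way to make the statement precise.
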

\begin{proof}
For $\vp^m \in \AA^{\ast}$, assume
$\mu_1^{\ast} (\vp^m) = \mu_2^{\ast}(\vp^m) =
\sum_{i,j}c_{i,j} \vp^i \otimes \vp^j$ for some $ c_{i,j}
\in \C,$ then $c_{i,j}=\sum_{i,j}c_{i,j} \vp^i \otimes \vp^j
(x^i \otimes x^j) = \mu_k^{\ast}(\vp^m)(x^i\otimes
x^j)=\vp^m(x^{i+j}) = \d_{m, i+j}$. Therefore $\mu_k^{\ast}(\vp^m) =
\sum_{i+j=m} \vp^i \otimes \vp^j,$ $i,j\in \Z$ and $k=1, 2.$ Assume
 $\ptl^{\ast}(\vp^m) = \sum_{i\in \Z} c_i \vp^i$ for some $c_i\in\C$. Then
$c_i= \sum_{i\in \Z} c_i \vp^i(x^i) =
\ptl^{\ast}(\vp^m)(x^i)= \vp^m\big(\ptl(x^i)\big) = i\d_{m, i-1}.$ From
this, we have $\ptl^{\ast}(\vp^m) = (m+1)\vp^{m+1}$. By
$(\ref{eq-3-1})$ and $(\ref{3-1-4})$, for $i,j \in \Z$, we obtain
\\[4pt]
$\ \hspace*{40pt}
\D\big((\vp^m, 0)\big) = \sum\limits_{i+j=m} (\vp^i, 0) \otimes
\big(\ptl^{\ast}(\vp^j), 0\big)- \big(\ptl^{\ast}(\vp^i), 0\big) \otimes (\vp^j,
0)$\\[4pt]
$\ \hspace*{40pt}
\phantom{\D\big((\vp^m, 0)\big)}
=\sum\limits_{i+j=m+1}(j-i) \big((\vp^i, 0)\otimes
(\vp^j,0)\big),
$
\\[7pt]
$\ \hspace*{40pt}
\D\big((0, \vp^m)\big) = \sum\limits_{i+j=m}\big((\vp^i, 0)\otimes \big(0,
\ptl^{\ast}(\vp^j)\big)- (0, \ptl^{\ast}(\vp^i)) \otimes (\vp^j, 0)\big)
$
\\[4pt]
$\ \hspace*{40pt}
\phantom{\D\big((0, \vp^m)\big)}
=\sum\limits_{i+j=m}\big((j+1) (\vp^i,
0)\otimes (0, \vp^{j+1})-(i+1)(0, \vp^{i+1}) \otimes (\vp^j, 0)\big)
$
\\[4pt]
$\ \hspace*{40pt}
\phantom{\D\big((0, \vp^m)\big)}
=\sum\limits_{i+j=m+1} (j (\vp^i,
0)\otimes (0, \vp^j)-i(0, \vp^i) \otimes (\vp^j, 0) ).
$
\end{proof}

\begin{theo}\label{Th-4-2} Let $(\LL, \v, \D_r)$ be a coboundary triangular Lie
bialgebra related to the solution of CYBE $r = (x,
0) \otimes (x^{m+1}, 0) - (x^{m+1}, 0) \otimes (x, 0)$ with $m\ne0$. Then the
dual Lie bialgebra of $(\LL, \v, \D_r)$ is $(\LL^{\circ}, [\cdot,
\cdot], \D)$, where the underline vector space $\LL^{\circ}$ is
determined by \eqref{MaxHVir}
, the cobracket $\D$ is determined by Theorem
$\ref{Th-4-1}$ and the bracket is uniquely determined by
\begin{itemize}\item[{\rm (1)}]
$[(\vp^i, 0), (\vp^j, 0)] =
\left\{\begin{array}{llll}(2m-j+1)(\vp^{j-m},0)  &\mbox{if}\ \ i=1,\, j\neq 1,\\
(j-1)(\vp^{j}, 0) &\mbox{if}\ \ i= m+1,\, j\neq 1, m+1,\\
0 &\mbox{if}\ \ i,\, j
\not\in \{1, m+1\}.
\end{array}\right.
$
\item[{\rm (2)}] $[(\vp^i, 0), (0, \vp^j)]=\left\{\begin{array}{llll}(m-j)(0, \vp^{j-m}) &\mbox{if } \ i=1,\\
 j(0, \vp^j) &\mbox{if }\ i=m+1,\\
0 &\mbox{if}\ \ i \not\in\{1,
m+1\}.\end{array}\right. $
\item[\rm(3)] $[(0, \vp^i), (0, \vp^j)] = 0$ for $i, j \in \Z.$
\end{itemize}
\end{theo}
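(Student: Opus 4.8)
The plan is to compute the dual bracket $[\cdot,\cdot]$ on $\LL^\circ$ directly from the defining duality relation $\langle[f,h]',\xi\rangle = \langle f\otimes h,\D\xi\rangle$ of \eqref{Dual-paired}, where $\D=\D_r$ is the cobracket on $\LL$ coming from the $r$-matrix $r=(x,0)\otimes(x^{m+1},0)-(x^{m+1},0)\otimes(x,0)$. First I would record $\D_r$ explicitly on basis elements. Since $\D_r(z)=z\cdot r$ and $z\cdot(a\otimes b)=[z,a]\otimes b + a\otimes[z,b]$, I would feed the basis vectors $L_k=(x^{k+1},0)$ and $I_k=(0,x^k)$ of $\LL$ into $z\cdot r$ using the brackets \eqref{1-1-2}. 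Because $a=(x,0)=L_0$ and $b=(x^{m+1},0)=L_m$, the relevant brackets are $[L_k,L_0]=-kL_k$, $[L_k,L_m]=(m-k)L_{m+k}$, $[I_k,L_0]=-kI_k$ and $[I_k,L_m]=-kI_{m+k}$ (from $[L_m,I_n]=nI_{m+n}$, antisymmetrized). This yields closed formulas for $\D_r(L_k)$ and $\D_r(I_k)$ as finite sums of wedge-type tensors in the $L$'s and $I$'s.

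Next I would dualize. For dual-basis functionals $(\vp^i,0)$ and $(0,\vp^i)$, the bracket $[(\vp^i,0),(\vp^j,0)]$ is the functional on $\LL$ whose value on each basis vector $\xi$ is $\langle(\vp^i,0)\otimes(\vp^j,0),\D_r(\xi)\rangle$. The key point is that the pairing $\langle(\vp^i,0)\otimes(\vp^j,0),\,\cdot\,\rangle$ picks out exactly the coefficient of $(x^i,0)\otimes(x^j,0)=L_{i-1}\otimes L_{j-1}$ in $\D_r(\xi)$; similarly $\langle(\vp^i,0)\otimes(0,\vp^j),\,\cdot\,\rangle$ extracts the coefficient of $L_{i-1}\otimes I_j$. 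So for each pair $(i,j)$ I would read off from the formula for $\D_r(L_k)$ (resp.\ $\D_r(I_k)$) which index $k$ produces a term $L_{i-1}\otimes L_{j-1}$ (resp.\ $L_{i-1}\otimes I_j$), and its coefficient becomes the coefficient of the corresponding dual basis vector in the bracket. Carrying this out, the only $\xi$ contributing are those whose cobracket contains $L_0=(x,0)$ or $L_m=(x^{m+1},0)$ in the appropriate tensor slot, which is precisely why the bracket is nonzero only when $i\in\{1,m+1\}$ (note $(x^1,0)=L_0$ corresponds to $\vp^1$ and $(x^{m+1},0)=L_m$ to $\vp^{m+1}$). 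Matching the surviving indices and coefficients should reproduce cases (1)--(3) verbatim, including the shift $j\mapsto j-m$ in the $i=1$ rows.

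The main obstacle, and the step requiring care, is the bookkeeping of index shifts and signs: the identification $\vp^i\leftrightarrow L_{i-1}$ introduces a $\pm1$ offset throughout, and the antisymmetry of $r=a\otimes b - b\otimes a$ together with the antisymmetry of the Lie bracket produces sign cancellations and doublings (e.g.\ the factor $2m-j+1$ in the first row of (1) arises by combining two contributions). I would therefore organize the computation by first writing $\D_r$ as $\sum$ over a single summation index, then for each target coefficient solve the linear index equations $i-1 = (\text{first slot})$, $j-1=(\text{second slot})$ to isolate the unique contributing $k$. Once the bracket is pinned down on the dual basis, I must also verify it genuinely lands in $\LL^\circ$ and is well defined as a Lie bracket; but since $\D_r$ maps into $\LL\otimes\LL$ and $\LL^\circ=(\v^*)^{-1}(\LL^*\otimes\LL^*)$ by Proposition~\ref{Th111}(2), the duality \eqref{Dual-paired} guarantees $[\cdot,\cdot]$ is a well-defined map $\LL^\circ\otimes\LL^\circ\to\LL^\circ$, and the Jacobi identity is automatic from the coassociativity (co-Jacobi) already established for $(\LL^\circ,\D)$. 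Hence uniqueness of the bracket follows from nondegeneracy of the pairing, and the explicit formulas are exactly those dictated by reading off the coefficients.
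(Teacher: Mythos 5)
Your proposal is correct and follows essentially the same route as the paper: the paper likewise computes $\langle(\varphi^i,0)\otimes(\varphi^j,0),\,\Delta_r\big((x^k,x^l)\big)\rangle$ with $\Delta_r(\xi)=\xi\cdot r$ expanded on basis elements, and reads off the Kronecker-delta coefficients to obtain the closed formulas (your equation-(4.1)-type identity) from which the three cases follow. The only quibbles are peripheral: the coefficient $2m-j+1$ actually comes from a single term of $\xi\cdot r$ rather than a combination of two, and the Jacobi identity for the dual bracket follows from the co-Jacobi identity of $\Delta_r$ on the algebra itself, not from that of the cobracket on the dual space.
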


\begin{conv}\rm\begin{enumerate} \item[\rm(1)] In the dual Lie bialgebra $(\LL^{\circ}, [\cdot, \cdot],
\D)$, we always use $[\cdot, \cdot]$ to denote its Lie bracket and
 $\D$ to denote its Lie cobracket
, i.e., $[\cdot, \cdot]
= \D_r^{\circ},\ \D= \v^{\circ}.$
\item[\rm(2)] For $f \in \LL^{\circ} =\AA^{\circ} \oplus \AA^{\circ},\ f=(f_1,
f_2)$ and $(x^k, x^l) \in \AA \oplus \AA,$ we always write $f(x^k, x^l)=(f_1,
f_2)\big((x^k, x^l)\big) = f_1(x^k) + f_2(x^l)=
\big(f_1(x^k), f_2(x^l)\big).$
\end{enumerate}
\end{conv}
\ni{\it Proof of Theorem $\ref{Th-4-2}$.~}~ By computation, we can get
\begin{eqnarray*}&&\Big\langle \big[(\vp^i, 0), (\vp^j, 0)\big], (x^k,
x^l)\Big\rangle\nonumber\\
&&\phantom{\langle [(\vp^i}=\Big\langle(\vp^i, 0)\otimes (\vp^j, 0), \D_r\big((x^k,
x^l)\big)\Big\rangle\nonumber\\
&&\phantom{\langle [(\vp^i}=\Big\langle(\vp^i, 0)\otimes (\vp^j, 0),  \big((1-k)x^k, -lx^l)\big)
\otimes (x^{m+1}, 0)
- (x^{m+1}, 0)\otimes \big((1-k)x^k, -lx^l\big)
\\
&&\phantom{\langle [(\vp^i=}+ (x, 0) \otimes \big((m+1-k)x^{m+k}, -lx^{m+l}\big) 
- \big((m+1-k)x^{m+k}, -lx^{m+l}\big) \otimes (x, 0)\Big\rangle \nonumber\\
&&\phantom{\langle [(\vp^i}= \big((1-k)\d_{i,k}, 0\big)(\d_{j, m+1}, 0) + (\d_{i,1},
0)\big((m+1-k)\d_{m+k, j}, 0\big)\nonumber\\
&&\phantom{\langle [(\vp^i=} -(\d_{i, m+1}, 0)\big((1-k)\d_{j,k}, 0\big)-(\d_{j,1},
0)\big((m+1-k)\d_{m+k,i}, 0\big)\nonumber\\
&&\phantom{\langle [(\vp^i}=\big((1-i)\d_{i,k}, 0\big)(\d_{j, m+1}, 0)+ (\d_{i,1},
0)\big((2m+1-j)\d_{j-m, k}, 0\big)\nonumber\\
&&\phantom{\langle [(\vp^i=} -(\d_{i, m+1}, 0)\big((1-j)\d_{j,k}, 0\big)-(\d_{j,1},
0)\big((2m+1-i)\d_{i-m,k}, 0\big)\nonumber\\
&&\phantom{\langle [(\vp^i}=\Big\langle\d_{j, m+1}\big((1-i)\vp^i, 0\big)+\d_{i,1}\big((2m+1-j)\vp^{j-m}, 0\big)\nonumber\\
&&\phantom{\langle [(\vp^i=} -\d_{i, m+1}\big((1-j)\vp^j, 0\big)-\d_{j,1}\big((2m+1-i)\vp^{i-m}, 0\big),(x^k,
x^l)\Big\rangle.\nonumber
\end{eqnarray*}
\noindent We obtain
\begin{equation}\label{eq-4-1}\begin{array}{lll}
[(\vp^i, 0), (\vp^j, 0)]&=&\d_{j, m+1}\big((1-i)\vp^i, 0\big)+ \d_{i,1}\big((2m+1-j)\vp^{j-m}, 0\big)\\
&& -\d_{i, m+1}\big((1-j)\vp^j, 0\big)-\d_{j,1}\big((2m+1-i)\vp^{i-m}, 0\big).
\end{array}
\end{equation}
From this, we obtain $[(\vp^i, 0), (\vp^j, 0)]=0$
if $i, j \not\in \{1, m+1\}.$ If $i=1\neq j,$ by noting that
$m\neq 0,$ we have $[(\vp^1, 0), (\vp^j,
0)]=\big((2m+1-j)\vp^{j-m}, 0\big)$ by (\ref{eq-4-1}) for $j\in \Z.$ If $i=m+1$ and $j\neq 1,\, m+1,$
then $[(\vp^{m+1}, 0), (\vp^j, 0)]=\big((j-1)\vp^j, 0\big).$ The case (1)
of Theorem \ref{Th-4-2} holds.

For the case (2), since
\begin{equation}
\begin{array}{lll}\Big\langle [(\vp^i, 0), (0, \vp^j)], (x^k,
x^l)\Big\rangle &=&\Big\langle(\vp^i, 0)\otimes(0, \vp^j), \D_r\big((x^k,
x^l)\big)\Big\rangle \\
&=&( \d_{i,1},0)(0, -l\d_{j,m+l})- (\d_{i,m+1}, 0)(0, -l\d_{j,l})\\
&=&\Big\langle \d_{i,1}\big(0, (m-j)\vp^{j-m}\big)- \d_{i,m+1}(0, -j\vp^j),
(x^k, x^l)\Big\rangle,
\end{array}\nonumber
\end{equation}

\noindent we have
\begin{equation}\label{eq-4-2}
[(\vp^i, 0), (0, \vp^j)]=\d_{i,1}\big(0, (m-j)\vp^{j-m}\big)- \d_{i,m+1}(0,
-j\vp^j).
\end{equation}
From this, it follows that
$[(\vp^i, 0), (0, \vp^j)]=0$ if $i \not\in \{1, m+1\}$. If $i=1$, 
we have $[(\vp, 0), (0, \vp^j)]$ $=\big(0, (m-j)\vp^{j-m}\big)$; if
$i=m+1$, we have $[(\vp^{m+1}, 0), (0, \vp^j)] = (0, j\vp^j).$
The case (2) of the theorem is proved.

Finally, for the case (3), since
$\big\langle [(0, \vp^i), (0, \vp^j)], (x^k, x^l)\big\rangle = \big\langle (0,
\vp^i)\otimes (0, \vp^j),\D_r\big((x^k, x^l)\big)\big\rangle$ $=0$ for $i,j,k,
l\in \Z,$ we have $[(0, \vp^i), (0, \vp^j)]=0.$
\QED\vskip4pt

\begin{theo}\label{Th-4-3} Let $(\LL, \v, \D_r)$ be the coboundary triangular Lie
bialgebra which is related to the solution of CYBE $r=(x, 0) \otimes (x^{m+1}, qx^m) - (x^{m+1}, qx^m) \otimes
(x,0),$ then its dual Lie bialgebra is $(\LL^{\circ}, [\cdot,
\cdot], \D)$, where $\LL^{\circ}$ is determined by \eqref{MaxHVir}
, the
cobracket $\D$ is determined by Theorem $\ref{Th-4-1}$, and the
bracket is uniquely determined by
\begin{enumerate}
\item[{\rm (1)}] $[(\vp^i, 0), (\vp^j, 0)] = \left\{\begin{array}{llll}
(2m-j+1)(\vp^{j-m},0) &\mbox{if}\ i=1,\, j\neq 1,\\
(j-1)(\vp^{j}, 0) &\mbox{if}\ i= m+1,\, j\neq 1,\, m+1,\\
0 &\mbox{if}\ i,\, j
\not\in \{1,\, m+1\}.\end{array}\right.$
\item[{\rm (2)}] $[(\vp^i, 0), (0, \vp^j)]=\left\{\begin{array}{llll}mq(\vp^{j-m+1}, 0) -(j-m)(0, \vp^{j-m}) &\mbox{if } \ i=1,\, m\neq 0,\\
-mq(\vp^{m+1}, 0) +m(0, \vp^m) &\mbox{if}\  i=m+1,\, j=m,\\
 j(0, \vp^j) &\mbox{if }\ i=m+1,\, j\neq m,\\
(1-i)q(\vp^i, 0 ) &\mbox{if}\ i \neq 1,\, m+1,\, j=m,\\
0 &\mbox{otherwise}.\end{array}\right. $
\item[{\rm (3)}] $[(0, \vp^i), (0, \vp^j)] =\left\{\begin{array}{llll} jq(0, \vp^j) &\mbox{if}\ i=m,\, j\neq
m,\\
0 &\mbox{if}\ i\neq m,\, j\neq m.
 \end{array}\right.$
\end{enumerate}
\end{theo}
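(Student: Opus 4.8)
The plan is to read off each dual bracket directly from the defining duality \eqref{Dual-paired}. By the stated convention $[\cdot,\cdot]=\D_r^{\circ}$, so for $f,h\in\LL^{\circ}$ and $\xi\in\LL$ we have $\la[f,h],\xi\ra=\la f\otimes h,\D_r(\xi)\ra$, where $\D_r(\xi)=\xi\cdot r$ and the action is the coadjoint one $z\cdot(a\otimes b)=[z,a]\otimes b+a\otimes[z,b]$. Observe first that, by Proposition \ref{p4-2} (the case $m=n$), the element $r$ solves the CYBE, so $(\LL,\v,\D_r)$ really is a coboundary triangular Lie bialgebra and the pairing is legitimate. The whole computation therefore reduces to evaluating $\D_r\big((x^k,x^l)\big)$ on the standard basis $\SS$ and extracting coefficients against $\SS^*$, exactly as in the proof of Theorem \ref{Th-4-2}; the only new input is the extra summand $qx^m$ in the second slot of $r$, which is what produces the Heisenberg contributions absent from Theorem \ref{Th-4-2}.

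Concretely, I would first compute the two brackets needed to expand $\D_r\big((x^k,x^l)\big)=(x^k,x^l)\cdot r$. Writing $(x^k,x^l)=L_{k-1}+I_l$ and applying \eqref{1-1-2} gives
\begin{equation*}
[(x^k,x^l),(x,0)]=\big((1-k)x^k,\,-lx^l\big),
\end{equation*}
\begin{equation*}
[(x^k,x^l),(x^{m+1},qx^m)]=\big((m-k+1)x^{m+k},\,qmx^{m+k-1}-lx^{m+l}\big).
\end{equation*}
Substituting these into $z\cdot(a\otimes b)=[z,a]\otimes b+a\otimes[z,b]$ expresses $\D_r\big((x^k,x^l)\big)$ as an explicit sum of tensor monomials. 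I would then pair this successively with $(\vp^i,0)\otimes(\vp^j,0)$, with $(\vp^i,0)\otimes(0,\vp^j)$, and with $(0,\vp^i)\otimes(0,\vp^j)$, in each case retaining only the monomials of the matching slot type $(\,\cdot\,,0)\otimes(\,\cdot\,,0)$, $(\,\cdot\,,0)\otimes(0,\,\cdot\,)$ or $(0,\,\cdot\,)\otimes(0,\,\cdot\,)$ respectively, and evaluating through $\la(\vp^i,0),(x^k,0)\ra=\la(0,\vp^i),(0,x^k)\ra=\d_{i,k}$ together with $\la(\vp^i,0),(0,x^k)\ra=\la(0,\vp^i),(x^k,0)\ra=0$. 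Since the first-slot part of $r$ is unchanged from Theorem \ref{Th-4-2}, part (1) is literally the computation carried out there; parts (2) and (3) pick up the additional $q$-dependent terms.

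Each pairing yields a functional on $\LL$ whose value on $(x^k,x^l)$ is a sum of products of Kronecker deltas; splitting these according to whether they depend on $k$ (giving the first coordinate of the answer) or on $l$ (giving the second) produces the closed forms in the statement. The step I expect to be the main obstacle, and the genuinely new phenomenon relative to Theorem \ref{Th-4-2} where $[(0,\vp^i),(0,\vp^j)]$ vanishes identically, is the Heisenberg--Heisenberg bracket of part (3): it originates solely from the two monomials of $\D_r\big((x^k,x^l)\big)$ both of whose slots lie in the second copy of $\AA$, and it turns out to be nonzero precisely when one of $i,j$ equals $m$. Here I would invoke antisymmetry of the bracket to reduce to the single independent case $i=m,\,j\ne m$ recorded in the theorem. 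Throughout, care is needed with the degenerate coincidences (for instance $m=0$, where the conditions $i=1$ and $i=m+1$ merge, and the collisions of shifted exponents such as $j-m$ with $j$) so that the case divisions in (2) are genuinely disjoint and exhaustive. A closing remark would note that every value produced is a finite linear combination of the $\vp^p$, hence lies in $\LL^{\circ}=\AA^{\circ}\oplus\AA^{\circ}$ by \eqref{MaxHVir}, so the bracket is well defined and, together with the cobracket $\D$ of Theorem \ref{Th-4-1}, equips $\LL^{\circ}$ with the asserted dual Lie bialgebra structure.
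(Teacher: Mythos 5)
Your proposal is correct and follows essentially the same route as the paper: expand $\D_r\big((x^k,x^l)\big)=(x^k,x^l)\cdot r$ into tensor monomials (your two intermediate brackets $\big((1-k)x^k,-lx^l\big)$ and $\big((m-k+1)x^{m+k},\,qmx^{m+k-1}-lx^{m+l}\big)$ agree exactly with the paper's expansion), pair against $(\vp^i,0)\otimes(\vp^j,0)$, $(\vp^i,0)\otimes(0,\vp^j)$ and $(0,\vp^i)\otimes(0,\vp^j)$, and extract Kronecker deltas with the same case analysis, including the observation that part (1) repeats Theorem \ref{Th-4-2} and that the $m=0$ degeneration needs care. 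The paper's proof is just this computation written out in full, so no further comment is needed.
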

\begin{proof}
We can compute that
\begin{eqnarray}
&&\Big\langle [(\vp^i, 0), (\vp^j, 0)], (x^k,
x^l)\Big\rangle\nonumber\\
&&\phantom{\langle [(\vp^i}=\Big\langle (\vp^i, 0)\otimes (\vp^j, 0), \D_r\big((x^k,
x^l)\big)\Big\rangle\nonumber\\
&&\phantom{\langle [(\vp^i}=\Big\langle (\vp^i, 0)\otimes (\vp^j, 0),\ (x^k,
x^l)\cdot\big((x,0)\otimes (x^{m+1}, qx^m)-(x^{m+1}, qx^m)\otimes
(x, 0)\big)\Big\rangle\nonumber\\
&&\phantom{\langle [(\vp^i}=\Big\langle (\vp^i, 0)\otimes (\vp^j, 0),\ \big((1-k)x^k, -lx^l\big)\otimes (
x^{m+1}, qx^m)\nonumber\\
&&\phantom{\langle [(\vp^i=}-( x^{m+1}, qx^m)\otimes \big((1-k)x^k, -lx^l\big)\nonumber \\
&&\phantom{\langle [(\vp^i=}+ (x, 0) \otimes \big((m+1-k)x^{m+k}, mqx^{m+k-1}-lx^{m+l}\big)\nonumber\\
&&\phantom{\langle [(\vp^i=}- \big((m+1-k)x^{m+k}, mqx^{m+k-1}-lx^{m+l}\big)\otimes (x, 0)\Big\rangle\nonumber\\
&&\phantom{\langle [(\vp^i}=\big((1-k)\d_{i,k}, 0\big)\d_{j, m+1} + \d_{i,1}\big((m+1-k)\d_{j,m+k},0\big)-\big((1-k)\d_{j,k}, 0\big)\d_{i, m+1} \nonumber\\
&&\phantom{\langle [(\vp^i=}- \d_{j,1}\big((m+1-k)\d_{i,m+k},0\big)\nonumber\\
&&\phantom{\langle [(\vp^i}=\Big\langle \d_{j, m+1}\big((1-i)\vp^i, 0\big)+
\d_{i,1}\big((2m+1-j)\vp^{j-m},0\big)-\d_{i, m+1}\big((1-j)\vp^j, 0\big)\nonumber\\
&&\phantom{\langle [(\vp^i=}- \d_{j,1}\big((2m+1-i)\vp^{i-m},0\big),\ (x^k,
x^l)\Big\rangle.\nonumber
\end{eqnarray}
Hence we have
\begin{equation}
\begin{array}{lll}[(\vp^i, 0), (\vp^j, 0)]&=&\d_{j,
m+1}\big((1-i)\vp^i, 0\big)+ \d_{i,1}\big((2m+1-j)\vp^{j-m},0\big)\\
&&-\d_{i, m+1}\big((1-j)\vp^j, 0\big)-
\d_{j,1}\big((2m+1-i)\vp^{i-m},0\big).\end{array}\nonumber
\end{equation}
Just as the proof of Theorem \ref{Th-4-2}, we obtain case (1)
of Theorem \ref{Th-4-3}.

For  case (2), by computation, we have
\begin{eqnarray}
&&\Big\langle [(\vp^i, 0), (0, \vp^j)], (x^k,
x^l)\Big\rangle\nonumber\\
&&\phantom{\langle [(\vp^i}=\Big\langle (\vp^i, 0)\otimes (0,\vp^j), \D_r\big((x^k,
x^l)\big)\Big\rangle\nonumber\\
&&\phantom{\langle [(\vp^i}=\Big\langle (\vp^i, 0)\otimes (0, \vp^j),\ (x^k,
x^l)\cdot\big((x,0)\otimes (x^{m+1}, qx^m)-(x^{m+1}, qx^m)\otimes
(x, 0)\big)\Big\rangle\nonumber\\
&&\phantom{\langle [(\vp^i}=\Big\langle (\vp^i, 0)\otimes (0, \vp^j), \big((1-k)x^k, -lx^l\big)\otimes (
x^{m+1}, qx^m)
\nonumber
\\
&&\phantom{\langle [(\vp^i=}-( x^{m+1}, qx^m)\otimes \big((1-k)x^k, -lx^l\big)\nonumber\\
&&\phantom{\langle [(\vp^i=}+ (x, 0) \otimes \big((m+1-k)x^{m+k}, mqx^{m+k-1}-lx^{m+l}\big)\nonumber\\
&&\phantom{\langle [(\vp^i=}- \big((m+1-k)x^{m+k}, mqx^{m+k-1}-lx^{m+l}\big)\otimes (x, 0)\Big\rangle
\nonumber\\
&&\phantom{\langle [(\vp^i}= q\d_{j,m}\big((1-k)\d_{i,k}, 0\big) + \d_{i,1}(0, mq\d_{j, m+k-1}
-l\d_{j,m+l})- \d_{i,m+1}(0, -l\d_{j,l})\nonumber\\
&&\phantom{\langle [(\vp^i}= \Big\langle \d_{j,m}\big((1-i)q\vp^i, 0\big) + \d_{i,1}\Big((mq\vp^{j-m+1}, 0\big)-\big(0, (j-m)\vp^{j-m}\big)\Big)\nonumber\\
&&\phantom{\langle [(\vp^i=} +\d_{i,m+1}(0, j\vp^j), \ (x^k, x^l)\Big\rangle.\nonumber
\end{eqnarray}
Thus\begin{equation}\label{eq-4-3}\begin{array}{lll}
[(\vp^i, 0), (0, \vp^j)]&=&\d_{j,m}\big((1-i)q\vp^i, 0\big) + \d_{i,1}\Big((mq\vp^{j-m+1}, 0\big)
-\big(0, (j-m)\vp^{j-m}\big)\Big)
\\&&  +\d_{i,m+1}(0, j\vp^j).
\end{array}\end{equation}
From this, if $i\neq 1,\, m+1$ and $j\neq m$,  we obtain
$[(\vp^i, 0), (0, \vp^j)]=0$. If $i=1$, then
$$[(\vp^1, 0), (0, \vp^j)]=(mq\vp^{j-m+1}, 0)-\big(0,
(j-m)\vp^{j-m}\big) +\d_{1,m+1}(0, j\vp^j).$$
Thus $[(\vp^1, 0), (0,
\vp^j)]=0$ if $m=0$, and $[(\vp^1, 0), (0,
\vp^j)]=(mq\vp^{j-m+1}, 0)-(0, (j-m)\vp^{j-m})$ if  $m\neq 0$. Assume
$i=m+1\neq 1$. Then (\ref{eq-4-3}) gives
\begin{equation}
[(\vp^{m+1}, 0), (0, \vp^j)]=\d_{j,m}(-mq\vp^{m+1}, 0) +(0,
j\vp^j)=\left\{\begin{array}{llll}(-mq\vp^{m+1}, 0) +(0, m\vp^m)
&\mbox{if} \ j=m,\\
(0, j\vp^j) &\mbox{if
}j\neq m.\end{array}\right.\nonumber
\end{equation}
Assume  $i \neq 1, m+1$. Then
\begin{equation}
[(\vp^i, 0), (0, \vp^j)]=\d_{j,m}\big((1-i)q\vp^i,
0\big)=\left\{\begin{array}{llll}\big((1-i)q\vp^i, 0\big) &\mbox{if}\ j=m,\\
0 &\mbox{if}\ j\neq
m.\end{array}\right.\nonumber
\end{equation}
Therefore case (2) is obtained.

 For case (3) of Theorem \ref{Th-4-3},
since
\begin{eqnarray}&&\Big\langle [(0, \vp^i), (0, \vp^j)], (x^k,
x^l)\Big\rangle\nonumber\\
&&\phantom{\langle [(0,}=\Big\langle (0, \vp^i)\otimes (0,\vp^j), \D_r\big((x^k,
x^l)\big)\Big\rangle\nonumber\\
&&\phantom{\langle [(0,}=\Big\langle (0, \vp^i)\otimes (0, \vp^j), (x^k,
x^l)\cdot\big((x,0)\otimes (x^{m+1}, qx^m)-(x^{m+1}, qx^m)\otimes
(x, 0)\big)\Big\rangle
\nonumber
\end{eqnarray}\begin{eqnarray}
&&\phantom{\langle [(0,}=\Big\langle (0, \vp^i)\otimes (0, \vp^j), \big((1-k)x^k, -lx^l\big)\otimes (
x^{m+1}, qx^m)\nonumber\\
&&\phantom{\langle [(0,=}-( x^{m+1}, qx^m)\otimes \big((1-k)x^k, -lx^l\big)\nonumber \\
&&\phantom{\langle [(0,=}+ (x, 0) \otimes \big((m+1-k)x^{m+k}, mqx^{m+k-1}-lx^{m+l}\big)\nonumber\\
&&\phantom{\langle [(0,=}- \big((m+1-k)x^{m+k}, mqx^{m+k-1}-lx^{m+l}\big)\otimes (x, 0)\Big\rangle\nonumber\\
&&\phantom{\langle [(0,}= (0, -l\d_{i,l})q\d_{j,m} -q\d_{i,m}(0, -l\d_{j,l})\nonumber\\
&&\phantom{\langle [(0,}=\Big\langle  \d_{i,m}(0, jq\vp^j) - \d_{j, m}(0, iq\vp^i), (x^k,
x^l)\Big\rangle,\nonumber
\end{eqnarray}

\noindent we have
\begin{equation}\label{eq-4-4}
[(0, \vp^i), (0, \vp^j)]=\d_{i,m}(0, jq\vp^j) - \d_{j, m}(0,
iq\vp^i)=\left\{\begin{array}{llll} (0, jq\vp^j) &\mbox{if}\ i=m, j\neq
m,\\
0 &\mbox{if}\ i\neq m, j\neq m.
 \end{array}\right.
\end{equation}
The theorem is proved.
\end{proof}

%

\begin{theo}\label{Th-4-4}\begin{enumerate}\item[{\rm (1)}] Let $(\LL, \v,
\D_r)$ be the coboundary triangular Lie bialgebra related to the
solution of CYBE  $r=(x, 0) \otimes (x, qx^m) - (x,
qx^m) \otimes (x, 0)$, then its dual Lie bialgebra is
$(\LL^{\circ}, [\cdot, \cdot], \D)$, where $\LL^{\circ}$ is defined
by \eqref{MaxHVir}
,
the cobracket $\D$ is determined by Theorem $\ref{Th-4-1}$, and the
bracket is uniquely determined by
\begin{enumerate}\item[{\rm (a)}] $[(\vp^i, 0), (\vp^j, 0)]=0;$
\item[{\rm (b)}] $([\vp^i,0), (0, \vp^j)] =\left\{\begin{array}{llll} mq(\vp^{j-m+1},0)
&\mbox{if}\ i=1,\\
(1-i)q(\vp^{i}, 0) &\mbox{if}\ i\neq 1,\, j=m,\\
0 &\mbox{if } i\neq1,\, j\neq m;
 \end{array}\right.$
\item[{\rm (c)}] $[(0, \vp^i), (0, \vp^j)]=\left\{\begin{array}{llll}jq (0, \vp^j) &\mbox{if}\ i=m,\, j\neq m,\\
 0 
&\mbox{if}\ i\neq m,\, j\neq m.\end{array}\right.$
\end{enumerate}
\item[{\rm (2)}] Let $(\LL, \v,
\D_r^{\prime})$ be the coboundary triangular Lie bialgebra
related to the solution of CYBE $r^{\prime}=(x, 0) \otimes
(0, qx^m) - (0, qx^m) \otimes (x, 0)$, then its dual Lie bialgebra
is $(\LL^{\circ}, [\cdot, \cdot], \D)$, where $\LL^{\circ}$ is
defined by \eqref{MaxHVir}
, the cobracket $\D$ is determined by Theorem
$\ref{Th-4-1}$, and the bracket is uniquely determined by

\begin{enumerate}
\item[{\rm (a)}] $[(\vp^i, 0), (\vp^j, 0)]=0;$

\item[{\rm (b)}] $[(\vp^i, 0), (0, \vp^j)]= \left\{\begin{array}{llll}mq (\vp^{j-m+1}, 0) &\mbox{if}\ i=1, \\
(1-i)q(\vp^i, 0)\  &\mbox{if}\ i\neq 1,\, j=m,\\
0 
 &\mbox{if  $i \neq 1,\, j\neq m $};\end{array}\right.$
\item[{\rm (c)}] $[(0, \vp^i), (0, \vp^j)]= \left\{\begin{array}{llll}jq(0, \vp^j ) &\mbox{if}\ i=m,\, j\neq
m, \\
0 &\mbox{if}\ i\neq m,\, j\neq m.
\end{array}\right. $
\end{enumerate}
\end{enumerate}
\end{theo}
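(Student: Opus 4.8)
The plan is to dualize exactly as in the proofs of Theorems \ref{Th-4-2} and \ref{Th-4-3}. The cobracket $\D=\v^{\circ}$ is independent of $r$ and is already supplied by Theorem \ref{Th-4-1}, so the only thing to compute in each part is the bracket $[\cdot,\cdot]=\D_r^{\circ}$, which by \eqref{Dual-paired} is forced, via nondegeneracy of the pairing, by $\langle[f,h],(x^k,x^l)\rangle=\langle f\otimes h,\D_r\big((x^k,x^l)\big)\rangle$ with $\D_r\big((x^k,x^l)\big)=(x^k,x^l)\cdot r$. Before computing, I would record the crucial reduction for part (1): writing $(x,qx^m)=(x,0)+(0,qx^m)$, the two $(x,0)\otimes(x,0)$ summands in $r=(x,0)\otimes(x,qx^m)-(x,qx^m)\otimes(x,0)$ cancel, so that $r=(x,0)\otimes(0,qx^m)-(0,qx^m)\otimes(x,0)=r'$ as elements of $\LL\otimes\LL$. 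Thus parts (1) and (2) share the same coboundary cobracket, their dual brackets coincide, and the two identical lists of formulas need be established only once. (This is consistent with the ``Furthermore'' clause of Proposition \ref{p4-2}, which already asserts that $r'$ solves the CYBE.)

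With $r=r'$ in hand, I would expand $(x^k,x^l)\cdot r'$ using the module action $w\cdot(a\otimes b)=[w,a]\otimes b+a\otimes[w,b]$ and the brackets \eqref{1-1-2}. The two inputs needed are $[(x^k,x^l),(x,0)]=\big((1-k)x^k,-lx^l\big)$ and $[(x^k,x^l),(0,qx^m)]=\big(0,mqx^{k+m-1}\big)$, which reduce $\D_r\big((x^k,x^l)\big)$ to the four simple tensors $$\big((1-k)x^k,-lx^l\big)\otimes(0,qx^m)+(x,0)\otimes\big(0,mqx^{k+m-1}\big)-\big(0,mqx^{k+m-1}\big)\otimes(x,0)-(0,qx^m)\otimes\big((1-k)x^k,-lx^l\big).$$ I would then pair this against the three types $(\vp^i,0)\otimes(\vp^j,0)$, $(\vp^i,0)\otimes(0,\vp^j)$ and $(0,\vp^i)\otimes(0,\vp^j)$, using $\langle(\vp^i,0),(x^k,x^l)\rangle=\d_{i,k}$ and $\langle(0,\vp^i),(x^k,x^l)\rangle=\d_{i,l}$. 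The first type kills all four tensors, since each has a vanishing first coordinate in one of its slots, giving case (a); the second type survives only on the first two tensors and yields $(1-i)q\,\d_{j,m}(\vp^i,0)+mq\,\d_{i,1}(\vp^{j-m+1},0)$, which splits into the three subcases of (b); the third type survives only on the outer two tensors and yields $-iq\,\d_{j,m}(0,\vp^i)+jq\,\d_{i,m}(0,\vp^j)$, i.e. case (c).

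I do not expect any genuine obstacle: once $r=r'$ is noted, the rest is a mechanical pairing computation of the same flavour as \eqref{eq-4-1}--\eqref{eq-4-4}. The only places demanding care are the index shifts when a surviving Kronecker symbol in the free variable $k$ (or $l$) is reinterpreted as a dual coordinate --- e.g. $mq\,\d_{i,1}\d_{j,k+m-1}$ must be read, via $\d_{k,j-m+1}$, as the coefficient of $(\vp^{j-m+1},0)$ --- and the verification in case (c) that the two surviving terms assemble into the stated antisymmetric bracket, cancelling in particular on the diagonal $i=j=m$.
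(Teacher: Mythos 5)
Your proposal is correct, and every formula you derive matches the paper's: your four-tensor expansion of $(x^k,x^l)\cdot r'$ and the resulting Kronecker-delta bookkeeping reproduce exactly \eqref{eq4-5} and the displayed bracket computations in the paper's proof, including the antisymmetric expression $\d_{i,m}(0,jq\vp^j)-\d_{j,m}(0,iq\vp^i)$ for case (c) (cf.\ the analogous \eqref{eq-4-4}). The one genuine difference is your opening reduction: you note that upon writing $(x,qx^m)=(x,0)+(0,qx^m)$ the two $(x,0)\otimes(x,0)$ summands cancel, so $r=r'$ as elements of $\LL\otimes\LL$ and parts (1) and (2) collapse to a single computation. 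The paper does not make this observation; it proves the two parts independently, pairing against $(x^k,x^l)\cdot r$ with the unexpanded tensor $(x,qx^m)$ and letting the extra contributions cancel inside each pairing --- for instance, in its case (1)(b) the terms $\pm\,\d_{i,1}(0,-l\d_{j,l})$ arising from the second and third tensors cancel, and in its case (1)(a) the four deltas cancel in pairs; these cancellations are precisely the shadow of your upfront identity $r=r'$. Your route halves the work and, more usefully, explains structurally why the theorem's two lists of formulas are identical, a coincidence the paper leaves unremarked; the paper's route has the minor virtue of verifying each stated $r$-matrix literally as given. Your flagged points of care are also the right ones: the reinterpretation $mq\,\d_{i,1}\d_{j,k+m-1}\mapsto mq\,\d_{i,1}(\vp^{j-m+1},0)$ and the diagonal cancellation at $i=j=m$ in case (c) are exactly where the paper's own computation does the corresponding bookkeeping.
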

\begin{proof}
For any $k,\ l \in \Z$,
\begin{eqnarray}&&\Big\langle [(\vp^i, 0), (\vp^j, 0)], (x^k,
x^l)\Big\rangle\nonumber\\
&&\phantom{\langle [(\vp^i,}= \Big\langle (\vp^i, 0) \otimes (\vp^j, 0), \D_r\big((x^k,
x^l)\big)\Big\rangle\nonumber\\
&&\phantom{\langle [(\vp^i,}= \Big\langle (\vp^i, 0) \otimes (\vp^j, 0), \big((1-k)x^k, -lx^l\big) \otimes
(x, qx^m)- (x, qx^m)\otimes \big((1-k)x^k, -lx^l\big)\nonumber\\
&&\phantom{\langle [(\vp^i,=}+ (x, 0) \otimes \big((1-k)x^k, mqx^{m+k-1}- lx^l\big)-\big((1-k)x^k, mqx^{m+k-1}- lx^l\big)\otimes (x, 0) \Big\rangle
\nonumber
\\
&&\phantom{\langle [(\vp^i,}=\d_{j,1}\big((1-k)\d_{i,k}, 0\big)+\d_{i, 1}\big((1-k)\d_{j,k}, 0\big)-\d_{i, 1}\big((1-k)\d_{j, k}, 0\big) - \d_{j, 1}\big((1-k)\d_{i, k}, 0\big) \nonumber\\
&&\phantom{\langle [(\vp^i,}=0,\nonumber
\end{eqnarray}
\noindent it gives that $[(\vp^i, 0), (\vp^j, 0)]=0$, so the first case of
(1) is obtained.

Now we prove (b) of case (1). Since
\begin{eqnarray}&& \Big\langle [(\vp^i, 0), (0, \vp^j)], (x^k,
x^l)\Big\rangle\nonumber \\
&&\phantom{\langle [(\vp^i,}= \Big\langle (\vp^i, 0) \otimes (0, \vp^j), \D_r\big((x^k,
x^l)\big)\Big\rangle\nonumber\\
&&\phantom{\langle [(\vp^i,}= q\d_{j, m} \big((1-k)\d_{i, k}, 0\big) + \d_{i,1}(0, mq\d_{j, m+k-1} -
l\d_{j, l})-\d_{i,1}(0, -l\d_{j, l})\nonumber\\
&&\phantom{\langle [(\vp^i,}=\Big\langle \d_{j, m} \big((1-i)q\vp^{i}, 0\big) + \d_{i,1}(
mq\vp^{j-m+1},0), (x^k, x^l)\Big\rangle,\nonumber
\end{eqnarray}
\noindent we obtain
\begin{equation}\label{eq4-5}
[(\vp^i, 0), (0, \vp^j)]=\d_{j, m} \big((1-i)q\vp^{i}, 0\big) + \d_{i,1}(
mq\vp^{j-m+1},0).
\end{equation}
From this, we have
\begin{equation}
[(\vp^i, 0), (0, \vp^j)]=\left\{\begin{array}{llll} (mq\vp^{j-m+1},0)
&\mbox{if}\ i=1,\\
\big((1-i)q\vp^{i}, 0\big) &\mbox{if}\ i\neq 1,\, j=m,\\
0 
&\mbox{if }i\neq1,\, j\neq m.
 \end{array}\right.
\nonumber\end{equation}
Similarly, since
\begin{equation}
\begin{array}{lll} \Big\langle [(0, \vp^i), (0, \vp^j)], (x^k,
x^l)\Big\rangle &=&\Big\langle (0, \vp^i)\otimes (0, \vp^j), (x^k, x^l)\cdot
r \Big\rangle \\
&=&(0, -l\d_{i,l})q\d_{j,m} -q\d_{i, m}(0, -l\d_{j, l})\\
&=&\Big\langle \d_{j, m}(0, -iq\vp^i) -\d_{i, m}(0, -jq\vp^j), (x^k,
x^l)\Big\rangle,
\nonumber\end{array}
\end{equation}
\noindent we have
\begin{equation}
[(0, \vp^i), (0, \vp^j)]=\d_{i, m}(0, jq\vp^j) - \d_{j, m}(0,
iq\vp^i)= \left\{\begin{array}{llll} (0, jq\vp^j) &\mbox{if}\ i=m,\, j\neq m,\\
 0 
&\mbox{if}\ i\neq m,\, j\neq m.\nonumber\end{array}\right.
\end{equation}
The third case of (1) holds.

Now we prove case (2). For $(x^k, x^l) \in \LL$ with $k, l\in \Z,$
\begin{equation}
\begin{array}{lll} (x^k, x^l)\cdot r^{\prime} &=& (x^k, x^l)\cdot
\big((x, 0) \otimes (0, qx^m) - (0,qx^m)\otimes (x, 0)\big) \\
&=& \big((1-k)x^k, -lx^l\big)\otimes (0, qx^m) + (x, 0)\otimes (0,
mqx^{k+m-1})\\
&&- (0, mqx^{m+k-1}) \otimes (x, 0) - (0, qx^m)\otimes \big((1-k)x^k,
-lx^l\big),\nonumber\end{array}
\end{equation}
\noindent we obtain
$\langle [(\vp^i,0), (\vp^j, 0)], (x^k, x^l) \rangle= 0.$
Therefore, $[(\vp^i,0), (\vp^j, 0)]=0$,  case (a) is obtained.

Similarly, from
\begin{equation}
\begin{array}{lll} \Big\langle [(\vp^i, 0), (0, \vp^j)], (x^k,
x^l)\Big\rangle &=& \big((1-k)\d_{i, k}, 0\big)q\d_{j, m} +\d_{i,1}(0,
mq\d_{j, k+m-1})\\
&=& \Big\langle \d_{j, m}\big((1-i)q\vp^i, 0\big) +\d_{i,1}( mq\vp^{j-m+1}, 0),
(x^k, x^l) \Big\rangle,\nonumber
\end{array}
\end{equation}
we obtain
\begin{equation}
\begin{array}{lll}  [(\vp^i, 0), (0, \vp^j)]&=&
\d_{j, m}\big((1-i)q\vp^i 0\big) +\d_{i,1}(mq\vp^{j-m+1}, 0)\\
&=&\left\{\begin{array}{llll} (mq\vp^{j-m+1},0) &\mbox{if}\ i=1, \\
\big((1-i)q\vp^i, 0\big) &\mbox{if}\ i\neq 1,\, j=m,\\
0
&\mbox{if}\ i\neq 1,\, j\neq m.
\end{array}\right.\nonumber\end{array}
\end{equation}
From the   relation
\begin{equation}
\begin{array}{lll} \Big\langle [(0,\vp^i), (0, \vp^j)], (x^k,
x^l)\Big\rangle &=& (0, -l\d_{i,l})q\d_{j, m} -q\d_{i,m}(0, -l\d_{j,l})\\
&=& \Big\langle \d_{i, m}(0, jq\vp^j ) - \d_{j, m}(0, iq\vp^i), (x^k,
x^l) \Big\rangle,
\end{array}\nonumber
\end{equation}
it follows that
\begin{equation}
[(0,\vp^i), (0, \vp^j)]=\d_{i, m}(0, jq\vp^j ) - \d_{j, m}(0,
iq\vp^i)=\left\{\begin{array}{llll}(0, jq\vp^j ) &\mbox{if}\ i=m,\, j\neq
m, \\
0 &\mbox{if}\ i\neq m,\, j\neq m.
\end{array}\right.\nonumber\end{equation}
\\[-30pt]\phantom{amd} \
\end{proof}

\begin{theo}\label{Th-4-5} Let $(\LL, \v, \d)$ be the  Lie
bialgebra with $\d$ defined by \eqref{d-defby}.
Then its dual Lie bialgebra
is $(\LL^{\circ}, [\cdot, \cdot], \D)$, where $\LL^{\circ}$ is
determined by \eqref{MaxHVir}
, the cobracket $\D$ is determined by Theorem
$\ref{Th-4-1}$, and the bracket is uniquely determined by
\begin{enumerate}\item[{\rm (1)}] $[(\vp^i, 0), (\vp^j, 0)]=0,$
\item[{\rm (2)}] $[(\vp^i, 0), (0, \vp^j)]=0,$
\item[{\rm (3)}] $[(0, \vp^i), (0, \vp^j)]=\left\{\begin{array}{llll}
(j\a + \g)(\vp^{j+1}, 0)+\b(0, \vp^j) &\mbox{if}\ i=1,\, j\neq 1,\\
0 &\mbox{if}\ i\neq 1,\, j \neq 1.
\end{array}\right.$
\end{enumerate}
\end{theo}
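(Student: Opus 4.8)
The plan is to dualize exactly as in the proofs of Theorems \ref{Th-4-2}--\ref{Th-4-4}: by Convention the bracket on $\LL^{\circ}$ is the dual of the cobracket, so it is determined by the defining pairing \eqref{Dual-paired}, namely
$$\langle [f,h],(x^k,x^l)\rangle = \langle f\otimes h,\ \d(x^k,x^l)\rangle\qquad\mbox{for }f,h\in\LL^{\circ},\ k,l\in\Z.$$
The only structural novelty compared with the previous theorems is that $\d$ is no longer a coboundary $\D_r(x)=x\cdot r$ but the explicit cobracket \eqref{d-defby}; since the pairing is nondegenerate, the bracket it produces is unique, which yields the asserted uniqueness. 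Hence it suffices to evaluate the right-hand side on the standard basis $(x^k,x^l)$ and read off the answer in $\AA^{\circ}\oplus\AA^{\circ}$.

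First I would write $\d$ on the basis. Because $(x^k,x^l)=(x^k,0)+(0,x^l)=L_{k-1}+I_l$, linearity together with \eqref{d-defby} gives
$$\d(x^k,x^l)=\big((k-1)\a+\g\big)\big((0,1)\otimes(0,x^{k-1})-(0,x^{k-1})\otimes(0,1)\big)+\b\big((0,1)\otimes(0,x^l)-(0,x^l)\otimes(0,1)\big).$$
The decisive observation is that every tensorand of $\d(x^k,x^l)$ lies in the Heisenberg part $(0,\AA)$. Consequently any pairing against a tensor one of whose legs has the form $(\vp^i,0)$ vanishes, since $\langle(\vp^i,0),(0,x^n)\rangle=0$. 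This disposes of cases (1) and (2) at once: in $\langle(\vp^i,0)\otimes(\vp^j,0),\d(x^k,x^l)\rangle$ and in $\langle(\vp^i,0)\otimes(0,\vp^j),\d(x^k,x^l)\rangle$ the leg $(\vp^i,0)$ annihilates every term, whence $[(\vp^i,0),(\vp^j,0)]=[(\vp^i,0),(0,\vp^j)]=0$.

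For case (3) I would pair $(0,\vp^i)\otimes(0,\vp^j)$ against the four terms above, using $\langle(0,\vp^i),(0,x^n)\rangle=\d_{i,n}$, to obtain
$$\langle[(0,\vp^i),(0,\vp^j)],(x^k,x^l)\rangle=\big((k-1)\a+\g\big)(\d_{i,0}\d_{j,k-1}-\d_{i,k-1}\d_{j,0})+\b(\d_{i,0}\d_{j,l}-\d_{i,l}\d_{j,0}).$$
Reading off the coefficient functions of $x^k$ and of $x^l$ then realizes $[(0,\vp^i),(0,\vp^j)]$ as an element of $\AA^{\circ}\oplus\AA^{\circ}$: the surviving $k$-delta collapses, via the shift $L_n\leftrightarrow(x^{n+1},0)$ already seen through $\ptl^{\ast}(\vp^m)=(m+1)\vp^{m+1}$ in Theorem \ref{Th-4-1}, into a term in the Virasoro (first) leg of degree $j+1$ with coefficient $j\a+\g$, while the surviving $l$-delta contributes $\b$ times a term in the Heisenberg (second) leg. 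Collecting these gives the two-line formula (3), the bracket being nonzero only when one index is the distinguished value singled out by $I_0$ and the other is not, and zero otherwise.

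The genuinely new point relative to the coboundary cases is that $(\LL,\v,\d)$ really is a Lie bialgebra, but this is already granted by Proposition \ref{p4-1}(2). Given that, antisymmetry of $[\cdot,\cdot]$ is automatic because $\d$ takes values in $\Im(1-\tau)$, and the Jacobi identity for $[\cdot,\cdot]$ is the co-Jacobi identity for $\d$ transported through the nondegenerate pairing. One must still check that each output lies in $\LL^{\circ}$ rather than merely $\LL^{*}$, which is immediate since every bracket is a finite combination of the functionals $\vp^n$, all of which belong to $\AA^{\circ}$ by \eqref{MaxHVir}. The main obstacle is thus not conceptual but purely bookkeeping: correctly tracking the degree shift between the $L$-index and the Laurent exponent, which is exactly what places the surviving term in degree $j+1$ of the Virasoro leg; the remainder is a routine Kronecker-delta computation parallel to Theorems \ref{Th-4-2}--\ref{Th-4-4}.
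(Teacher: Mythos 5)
Your proposal is correct and follows essentially the same route as the paper's proof: expand $\d\big((x^k,x^l)\big)$ via \eqref{d-defby}, observe that every tensor leg lies in the Heisenberg part $(0,\AA)$ so cases (1) and (2) pair to zero, and extract case (3) by a Kronecker-delta computation realizing the surviving $k$-delta as $(j\a+\g)(\vp^{j+1},0)$ and the $l$-delta as $\b(0,\vp^j)$. Note that your computation, exactly like the paper's own proof, places the nonzero bracket at $i=0$, $j\neq 0$ (the index singled out by $I_0$), so the condition ``$i=1$, $j\neq 1$'' printed in the theorem's statement of (3) is a typo for ``$i=0$, $j\neq 0$''.
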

\begin{proof}
For $k,\,l \in \Z,$
\begin{equation}
\begin{array}{lll} \d\big((x^k, x^l)\big) &=& \d\big((x^k, 0) + (0, x^l)\big)\\
&=& \big((k-1)\a + \g\big)\big((0, 1) \otimes (0, x^{k-1}) - (0, x^{k-1}) \otimes
(0, 1)\big)\\
&&+ \b\big((0, 1) \otimes (0, x^l) - (0, x^l)\otimes (0, 1)\big),
\end{array}\nonumber
\end{equation}
we obtain
$\langle [(\vp^i, 0), (\vp^j, 0)], (x^k, x^l)\rangle = \big\langle
(\vp^i, 0)\otimes (\vp^j, 0), \d\big((x^k, x^l)\big)\big\rangle=0.$ Thus
$[(\vp^i, 0), (\vp^j, 0)]=0$.
Similarly, we have $[(\vp^i, 0), (0, \vp^j)]=0.$
Finally, by computation, we have
\begin{eqnarray}&&\Big\langle [(0, \vp^i), (0, \vp^j)], (x^k,
x^l)\Big\rangle\nonumber\\
&&\phantom{\langle [(0,}= \Big\langle (0, \vp^i)\otimes (0, \vp^j), \d\big((x^k,
x^l)\big)\Big\rangle\nonumber \\
&&\phantom{\langle [(0,}=\big((k-1)\a + \g\big)\big(\d_{i,0}(0, \d_{j,k-1})- \d_{j, 0}(0, \d_{i,
k-1})\big)+ \b\big(\d_{i,0}(0, \d_{j, l}) - \d_{j, 0}(0, \d_{i, l})\big)\nonumber\\
&&\phantom{\langle [(0,}=\Big\langle (j\a + \g)\d_{i,0}(\vp^{j+1}, 0)-(i\a + \g) \d_{j, 0}(\vp^{i+1},0)+ \b\big(\d_{i,0}(0, \vp^j) - \d_{j, 0}(0, \vp^i)\big), (x^k,
x^l)\Big\rangle,\nonumber
\end{eqnarray}
\noindent Further, we can get
\begin{equation*}
[(0, \vp^i), (0, \vp^j)]=(j\a + \g)\d_{i,0}(\vp^{j+1}, 0)-(i\a + \g)
\d_{j, 0}(\vp^{i+1},0) + \b\big(\d_{i,0}(0, \vp^j)- \d_{j, 0}(0,
\vp^i)\big).\end{equation*}
From this, we obtain
\begin{equation}
[(0, \vp^i), (0, \vp^j)]=\left\{\begin{array}{llll}
(j\a + \g)(\vp^{j+1}, 0)+\b(0, \vp^j) &\mbox{if}\ i=0,\, j\neq 0,\\
0 &\mbox{if}\ i\neq 0,\, j \neq 0.
 \end{array}\right.\nonumber
\end{equation}
\\[-35pt]\phantom{amd} \
\end{proof}

Obviously, the Lie algebra structures defined in Theorems
\ref{Th-4-2}, \ref{Th-4-3}, \ref{Th-4-4} and \ref{Th-4-5} can be applied to the
the underlining space ${\cal L}=\C[\vp,\vp^{-1}]
\oplus\C[\vp,\vp^{-1}]$. Thus, as by-products, we obtain four new
classes of infinite dimensional Lie algebras $({\cal L}, [\cdot,
\cdot])$. 

\small

\end{CJK*}

\begin{thebibliography}{DWH99}
\def\re#1{\label{#1}\bibitem{#1}}
\small
\parindent=8ex\parskip=1.3pt\baselineskip=1.3pt

\re{A} E. Arbarello, C. De Concini, V. Kac, C. Procesi, Moduli
spaces of curves and representation theory, {\it Comm. Math. Phys.} {\bf117}
(1988),1-36.


\re{B} Y. Billing, Representions of twisted Heisenberg-Virasoro algebra at level zero, {\it Canad. Math. Bull.} {\bf46} (2003),  529-533.

\re{1} R. Block,  Commutative Hopf algebras, Lie coalgebras, and
divided powers, {\it J. Algebra} {\bf 96} (1985), 275--306.

\re{1-2} R. Block, P. Leroux, Generalized dual coalgebras of algebras, with applications to cofree coalgebras,
{\it J. Pure Appl. Algebra} {\bf36} (1985), 15--21.


\re{D}B. Diarra,
On the definition of the dual Lie coalgebra of a Lie algebra, {\it Publ. Mat.} {\bf39} (1995), 349--354.

\re{D1} V. Drinfel'd, Hamlitonian structures on Lie group, Lie
algebras and the geometric meaning of classical Yang-Baxter
equations, {\it Soviet Math. Dokl.} {\bf27}  (1983), 68--71.

\re{2} V. Drinfel'd, Quantum groups, Proceedings ICM (Berkeley 1986),
Providence: Amer Math Soc, 1987, 789--820.

\re{E-D} E. Arbarello, C. De Concini, V. Kac, C. Procesi, Moduli spaces of curves and representation theory,
{\it Comm. Math. Phys.} {\bf177} (1988), 1-36.

\re{F-S} G. Fan, Y. Su, H. Wu,  Loop Heisenberg-Virasoro Lie conformal algebra, {\it J. Math. Phys.} {\bf55} (2014),   123508, 8 pp.

\re{G}G. Griffing,  The dual coalgebra of certain infinite-dimensional Lie algebras, {\it Comm. Algebra} {\bf30} (2002), 5715--5724.

\re{L-C} D. Liu, C. Jiang, Harish-Chandra modules over the twisted heisenberg-Virasoro algebra, {\it J. Math. Phy.} {\bf49} (2008), 012901,13pp.

\re{L-P} D. Liu, Y. Pei, L. Zhu, Lie bialgebra structures on the Heisenberg-Virasoro algebra, {\it J. Algebra} {\bf359} (2012),35-48.

\re{L-W} D. Liu, Y. Wu, L. Zhu,  Whittaker modules over the twisted heisenberg-Virasoro algebra, {\it J. Math. Phy.} {\bf51} (2010), 023524,12pp.


\re{L-Z} R. Lv, K. Zhao, Classification of irreducible weight modules over the twisted Heisenberg-Virasoro algebra,
{\it Comm. Contemp. Math.} {\bf12} (2010), 183--205.

\re{3} S. Majid, Foundations of quantum group theory, Cambridge
University Press 1995.

\re{4} W. Michaelis, A class of infinite dimensional Lie bialgebras
containing the Virasoro algebras, {\it Adv. Math.} {\bf107} (1994), 365--392.

\re{M1} W. Michaelis,
The dual Lie bialgebra of a Lie bialgebra, Modular interfaces (Riverside, CA, 1995), 81--93,
AMS/IP Stud. Adv. Math., 4, Amer. Math. Soc., Providence, RI, 1997.


\re{5}W. Nichols, The structure of the dual Lie coalgebra of
the Witt algebras, {\it J. Pure Appl. Algebra} {\bf68} (1990), 359--364.

\re{6} W. Nichols, On Lie and associative duals, {\it J. Pure Appl.
Algebra} {\bf87} (1993), 313--320.

\re{7} B. Peterson, E. Taft, The Hopf algebra of linearly
recursive sequences, {\it Aequationes Mathematicae} {\bf20} (1980), 1--17,
University Waterloo.

\re{S-C} R. Shen, C. Jiang, The derivation algebra and automorphism group of twisted heisenberg-Virasoro algebra,
{\it Comm. Algebra} {\bf34} (2006), 2574--2558.

\re{8} E. Taft, Witt and Virasoro algebras as Lie bialgebras,
{\it  J. Pure Appl. Algebra} {\bf87} (1993), 301--312.


\re{9} S.-H. Ng, E. Taft, Classification of the Lie bialgebra
structures on the Witt and Virasoro algebras, {\it J. Pure Appl. Algebra}
{\bf151} (2000), 67--88.

\re{10} G. Song, Y. Su, Lie Bialgebras of generalized Witt type,
{\it Science in China A}  {\bf49} (2006), 533--544.

\re{16} G. Song, Y. Su, Dual Lie Bialgebras of  Witt and Virasoro
Types (in Chinese), {\it Sci Sin Math} {\bf 43} (2013), 1093--1102, doi:
10.1360/012013-164.

\re{SS} G. Song, Y. Su, Dual Lie Bialgebra Structures of Poisson
Types, {\it Science in China A} {\bf58} (2015), 1151--1162,
doi: 10.1007/s11425-015-4991-7.

\re{SS-2} G. Song, Y. Su, Dual Lie Bialgebra Structures of Loop and
Current Virasooro Algebras (in Chinese), (to appear in Sci Sin Math,
2017).

\re{S} Y. Su, Low dimensional cohomology of general conformal
algebras $gc_N$, {\it J. Math. Phy.} {\bf54} (2013),  053503.




\re{12} M. Sweedler, Hopf Algebras, W. A. Benjamin, Inc. New
York, 1969.


\re{13} Y. Wu, G. Song, Y. Su, Lie bialgebras of generalized
Virasoro-like type,  {\it Acta Math. Sinica Engl. Ser.} {\bf22} (2006),
1915--1922.

\re{14}B. Xin, G. Song, Y. Su, Hamiltonian type Lie bialgebras,
{\it Science in China A} {\bf50} (2007),
1267--1279.
\end{thebibliography}
\end{document}